\theoremstyle{definition}
\newtheorem{definition}{Definition}[section]
\theoremstyle{plain}
\newtheorem{theorem}[definition]{Theorem}
\newtheorem{proposition}[definition]{Proposition}
\newtheorem{corollary}[definition]{Corollary}
\newtheorem{lemma}[definition]{Lemma}
\theoremstyle{remark}
\newtheorem{remark}[definition]{Remark}
\newcommand{\bb}[1]{\boldsymbol{#1}}
\newcommand{\Z}{\boldsymbol{Z}}
\newcommand{\x}{\boldsymbol{x}}
\newcommand{\y}{\boldsymbol{y}}
\newcommand{\h}{\boldsymbol{h}}
\newcommand{\ttt}{\boldsymbol{t}}
\newcommand{\uu}{\boldsymbol{u}}
\newcommand{\NN}{\mathbb{N}}
\newcommand{\RR}{\ensuremath{\mathbb{R}}}
\newcommand{\EE}{\ensuremath{\mathbb{E}}}
\newcommand{\Var}{\ensuremath{\text{\normalfont Var}}}
\newcommand{\Cov}{\ensuremath{\text{\normalfont Cov}}}
\newcommand{\T}{\tilde}
\newcommand{\G}{{\boldsymbol \gamma}}
\newcommand{\TG}{{\boldsymbol {\tilde\gamma}}}
\newcommand{\0}{\boldsymbol{0}}
\title{Characterization Theorems for Pseudo-Variograms}
\author{Christopher D\"orr\footnote{Institute of Mathematics, University of Mannheim, 68159 Mannheim, Germany. Email: chrdoerr@mail.uni-mannheim.de} {} and Martin Schlather\footnote{Institute of Mathematics, University of Mannheim, 68159 Mannheim, Germany.}}
\date{\today}
\begin{document}

\maketitle

\setstretch{1.25}
\begin{abstract}
	Pseudo-variograms appear naturally in the context of multivariate Brown-Resnick processes, and are a useful tool for analysis and prediction of multivariate random fields. We give a necessary and sufficient criterion for a matrix-valued function to be a pseudo-variogram, and further provide a Schoenberg-type result connecting pseudo-variograms and multivariate correlation functions. By means of these characterizations, we provide extensions of the popular univariate space-time covariance model of Gneiting to the multivariate case. 
\end{abstract}

{\small
	\noindent 
	\textit{Keywords}: multivariate geostatistics; conditionally negative definite; positive definite; space-time covariance functions; Gneiting functions
	\smallskip\\
	\noindent \textit{2010 MSC}:  {Primary 86A32}\\ }{
	\phantom{\textit{2010 MSC}:}{Secondary 60G10} 
}

\section{Introduction} 

With increasing availability of multivariate data and considerable improvements of computational feasibility, multivariate random fields have become a significant part of geostatistical modeling throughout recent years.

These random fields are usually assumed to be either second-order stationary or intrinsically stationary. An $m$-variate random field $\{\Z(\x)=(Z_1(\x),\dots,Z_m(\x))^\top, \x\in\RR^d \}$ is second-order stationary if it has a constant mean and if its auto- and cross-covariances $\Cov(Z_i(\x+\h), Z_j(\x))$, $ \x, \h \in \RR^d$, $i,j=1,\dots,m$, exist and are functions of the lag $\h$ only. It is called intrinsically stationary, if the increment process \\$\{\Z(\x+\h)-\Z(\x), \x \in \RR^d\}$ is second-order stationary for all $\h \in \RR^d$. In this case, the function $\TG: \RR^d \rightarrow \RR^{m \times m}$,  \begin{equation*}
\T\gamma_{ij}(\h)= \frac12\Cov(Z_i(\x+\h)-Z_i(\x), Z_j(\x+\h)-Z_j(\x)), \quad \x,\h \in \RR^d, \quad i,j=1,\dots,m,
\end{equation*} is well-defined and is called cross-variogram \cite{myers1982matrix}.
If we additionally assume that \\ $Z_i(\x+\h)-Z_j(\x)$ is square integrable and that $\Var(Z_i(\x + \h) - Z_j(\x))$ does not depend on $\x$ for all $\x, \h\in \RR^d$, $i,j=1,\dots,m$, then we can also define the so-called pseudo-variogram  $\G: \RR^d \rightarrow \RR^{m \times m}$ \cite{myers1991pseudo} via
\begin{equation*}
\gamma_{ij}(\h)= \frac12\Var(Z_i(\x+\h)-Z_j(\x)), \quad \x,\h \in \RR^d, \quad i,j=1,\dots,m.
\end{equation*}
Obviously, the diagonal entries of pseudo- and cross-variograms coincide and contain univariate variograms $\gamma_{ii}(\h)= \frac12\Var(Z_i(\x+\h)-Z_i(\x))$, $i=1,\dots,m$ \cite{matheron1972leccon,matheron1973intrinsic}.

Both cross- and pseudo-variograms are commonly used in geostatistics to capture the degree of spatial dependence \cite{chen2019parametric}. There is some controversy on which one to use, since both have their benefits and drawbacks. The cross-variogram, on the one hand, is well-defined under weaker assumptions and might be considered as the natural extension of the concept of a variogram from the univariate to the multivariate case \cite{chen2019parametric}. However, it requires measurements of the quantities of interest at the same locations for estimation in practical applications \cite{chen2019parametric}. Moreover, it only reproduces the symmetric part of a cross-covariance function of a stationary random field, see \cite{wackernagel2003multivariate}, for instance. The pseudo-variogram, on the other hand, can capture asymmetry, and provides optimal co-kriging predictors without imposing any symmetry assumption on the cross-dependence structure \cite{ver1993multivariable}, but is difficult to interpret in practice due to considering differences of generally different physical quantities, cf.\,\cite{cressie1998variance} and their account on it.

From a theoretical perspective, pseudo-variograms are interesting objects, since 
they are not only found in multivariate geostatistics, but also appear naturally in extreme value theory in the context of multivariate Brown-Resnick processes \cite{genton2015maxstable,oesting2017}. However, pseudo-variograms, in contrast to cross-variograms, have not been well enough understood yet. So far, elementary properties \cite{du2012}, their relation to cross-variograms and cross-covariance functions \cite{myers1991pseudo, papritz1993}, their applicability to co-kriging \cite{myers1991pseudo, ver1993multivariable, cressie1998variance}, and limiting behaviour \cite{papritz1993,oesting2017} are known, but a concise necessary and sufficient criterion for a matrix-valued function to be a pseudo-variogram is missing. This lack of an equivalent characterization makes it very difficult to show the validity of a function as a pseudo-variogram, cf.\,\cite[p.\,239]{ma2011class}, for instance, unless it can be led back to an explicit construction of a random field as in \cite{chen2019parametric} or \cite{oesting2017}. 

Equivalent characterizations are well-known for univariate variograms, see \cite{matheron1972leccon,gneiting2001}, for instance, and involve the notion of conditional negative definiteness. These characteristics are intimately connected with a result which can be mainly attributed to Schoenberg \cite{berg1984harmonic, schoenberg1938metric}, implying that a function $\gamma: \RR^d \rightarrow \RR$ is a univariate variogram if and only if $\exp(-t\gamma)$ is a (univariate) correlation function for all $t >0$ \cite{gneiting2001}. Such a characterization of $\gamma$ in the multivariate case, however, is unknown in geostatistical literature. For cross-variograms, there is a result for the if part \cite[Theorem 10]{ma2011vector}. The only if part is false in general, see \cite[Remark 2]{schlather2010}, for instance.

The aim of this article is to fill these gaps. The key ingredient to do this is to apply a stronger notion of conditional negative definiteness for matrix-valued functions than the predominant one in geostatistical literature. We 
discuss this notion in Section~\ref{sec:negativedefinite}, and provide a first characterization of pseudo-variograms in these terms. This characterization leads to a Schoenberg-type result in terms of pseudo-variograms in Section~\ref{sec:schoenberg}, thus making a case for proponents of pseudo-variograms, at least from a theoretical standpoint. In Section~\ref{sec:gneitingmultivariat}, we apply this characterization and illustrate its power by extending versions of the very popular space-time covariance model of Gneiting \cite{gneiting2002} to the multivariate case.

\section{Conditional negative definiteness for matrix-valued functions}\label{sec:negativedefinite}
Real-valued conditionally negative definite functions are essential to characterize variograms. A function $\gamma: \RR^d \rightarrow \RR$ is a variogram, if and only if $\gamma(0)=0$ and $\gamma$ is conditionally negative definite, i.e.\,$\gamma$ is symmetric and for all $n \ge 2$, $\bb{x_1},\dots,\bb{x_n} \in \RR^d$, $a_1,\dots,a_n \in \RR$ such that $\sum_{k=1}^n a_k=0$, the inequality $\sum_{i=1}^n\sum_{j=1}^n a_i \gamma(\bb{x_i}-\bb{x_j})a_j \le 0$ holds  \cite{matheron1972leccon}. An extended notion of conditional negative definiteness for matrix-valued functions is part of a characterization of cross-variograms. A function $\TG: \RR^d \rightarrow \RR^{m \times m}$ is a cross-variogram, if and only if $\TG(\0) =\0$, $\TG(\h)=\TG(-\h)=\TG(\h)^\top$ and \begin{equation} \label{eq:weaknd} \sum_{i=1}^n\sum_{j=1}^n \bb{a_i^\top} \TG(\bb{x_i}-\bb{x_j})\bb{a_j} \le 0 \end{equation} for $n \ge 2$, $\bb{x_1},\dots,\bb{x_n} \in \RR^d$, $\bb{a_1},\dots,\bb{a_n} \in \RR^m$ such that $\sum_{k=1}^n \bb{a_k} =\0$ \cite{ma2011vector}.
A function satisfying condition \eqref{eq:weaknd} is called an almost negative-definite matrix-valued function in \cite[p.\,40]{xie1994positive}. 

A pseudo-variogram $\G$ has similar, but only necessary properties, see \cite{du2012}. It holds that $\gamma_{ii}(\0)=0$, and $\gamma_{ij}(\h)=\gamma_{ji}(-\h)$, $i,j=1,\dots,m$. Additionally, a pseudo-variogram is an almost negative-definite matrix-valued function as well, but inequality~\eqref{eq:weaknd}, loosely speaking, cannot enforce non-negativity on the secondary diagonals. 
Therefore, we consider the following stronger notion of conditional negative definiteness, see \cite{gesztesy2016conditional}.

\begin{definition} \label{def:strongnd}
	A function $\G: \RR^d \rightarrow \RR^{m \times m}$ is called conditionally negative definite, if 
	\begin{subequations}{
			\begin{align}
			\gamma_{ij}(\h)=\gamma_{ji}(-\h), \quad i,j=1,\dots,m, \label{eq:hermitian} \\
			\sum_{i=1}^n\sum_{j=1}^n \bb{a_i^\top} \TG(\bb{x_i}-\bb{x_j})\bb{a_j} \le 0, \label{eq:strongnd}
			\end{align}
	}\end{subequations}
	for all $n \in \NN$, $\bb{x_1},\dots,\bb{x_n} \in \RR^d$, $\bb{a_1},\dots,\bb{a_n} \in \RR^m$ such that $\bb{1_m^\top}\sum_{k=1}^n \bb{a_k} =0$ with $\bb{1_m}:=~(1,\dots, 1)^\top \in \RR^m$.
\end{definition}

Obviously, the set of conditionally negative definite matrix-valued functions is a convex cone which is closed under integration and pointwise limits, if existing. In the univariate case, the concepts of conditionally and almost negative definite functions coincide, reproducing the traditional notion of real-valued conditionally negative definite functions.
The main difference between them is the broader spectrum of vectors for which inequality \eqref{eq:strongnd} has to hold, in that the sum of all components has to be zero instead of each component of the sum itself. This modification particularly includes sets of linearly independent vectors in the pool of admissible test vector families, resulting in more restrictive conditions on the secondary diagonals. Indeed, choosing $n=2$, $\bb{x_1}=\h \in \RR^d$, $\bb{x_2}=\0$, and $\bb{a_1}=\bb{e_i}$, $\bb{a_2}=-\bb{e_j}$ in Definition \ref{def:strongnd} with $\{\bb{e_1},\dots,\bb{e_m}\}$ denoting the canonical basis in $\RR^m$, we have $\gamma_{ij}(\h) \ge 0$ for a conditionally negative definite function $\G: \RR^d \rightarrow \RR^{m \times m}$ with $\gamma_{ii}(\0) = 0$, $i,j=1,\dots,m$, fitting the non-negativity of a pseudo-variogram. In fact, the latter condition on the main diagonal and the conditional negative definiteness property are sufficient to characterize pseudo-variograms.  

\begin{theorem}\label{thm:characterization1}
	Let $\G: \RR^d \rightarrow \RR^{m \times m}$. Then there exists a centred Gaussian random field $\Z$ on $\RR^d$ with pseudo-variogram $\G$, if and only if
	$\gamma_{ii}(\0)=0$, $i=1,\dots,m$, and $\G$ is conditionally negative definite.
\end{theorem}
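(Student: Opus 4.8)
The plan is to treat the two implications separately: the necessity (``only if'') direction is a direct second-moment computation, while the sufficiency (``if'') direction rests on reducing \eqref{eq:strongnd} to the classical scalar theory of conditionally negative definite kernels.

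For necessity, suppose $\Z$ is a centred Gaussian field with pseudo-variogram $\G$. Then $\gamma_{ii}(\0)=\tfrac12\Var(Z_i(\x)-Z_i(\x))=0$, and $\gamma_{ij}(\h)=\gamma_{ji}(-\h)$ follows from the definition together with invariance of the variance under a sign change and a shift of the base point, giving \eqref{eq:hermitian}. To verify \eqref{eq:strongnd}, fix $\bb{x_1},\dots,\bb{x_n}$ and $\bb{a_1},\dots,\bb{a_n}\in\RR^m$ with $\bb{1_m^\top}\sum_k\bb{a_k}=0$, write $\bb{a_i}=(a_{i1},\dots,a_{im})^\top$, and use the stationarity of the pseudo-variance to replace $\gamma_{pq}(\bb{x_i}-\bb{x_j})$ by $\tfrac12\EE[(Z_p(\bb{x_i})-Z_q(\bb{x_j}))^2]$, where centredness lets me discard the mean. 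Expanding the square and summing over all four indices with weights $a_{ip}a_{jq}$, the two pure terms each carry a factor equal to the total coefficient sum $\bb{1_m^\top}\sum_k\bb{a_k}=0$ and hence vanish, leaving $\sum_{i,j}\bb{a_i^\top}\G(\bb{x_i}-\bb{x_j})\bb{a_j}=-\EE[(\sum_{i,p}a_{ip}Z_p(\bb{x_i}))^2]\le0$. I expect no obstacle here; the content is precisely that the stronger constraint on $\bb{1_m^\top}\sum_k\bb{a_k}$ is exactly what annihilates the pure terms.

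For sufficiency, the key observation is that \eqref{eq:hermitian} and \eqref{eq:strongnd} say exactly that the scalar kernel $G$ on $S:=\RR^d\times\{1,\dots,m\}$ given by $G((\x,p),(\y,q)):=\gamma_{pq}(\x-\y)$ is symmetric and conditionally negative definite in the classical scalar sense, since the admissible test families are precisely those whose coefficients sum to zero over all of $S$. I would then invoke the classical Schoenberg lemma \cite{berg1984harmonic}: fixing a reference node $s_0:=(\0,r)$ for an arbitrary $r\in\{1,\dots,m\}$, the kernel $\varphi(s,t):=G(s,s_0)+G(s_0,t)-G(s,t)-G(s_0,s_0)$ is positive definite on $S$. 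Using $G(s_0,s_0)=\gamma_{rr}(\0)=0$, this reads $\varphi((\x,p),(\y,q))=\gamma_{pr}(\x)+\gamma_{rq}(-\y)-\gamma_{pq}(\x-\y)$, and by Kolmogorov's extension theorem there exists a centred Gaussian field $\Z$ on $\RR^d$ with $\EE[Z_p(\x)Z_q(\y)]=\varphi((\x,p),(\y,q))$.

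It remains to check that this field has pseudo-variogram $\G$. Expanding $\tfrac12\EE[(Z_i(\x+\h)-Z_j(\x))^2]=\tfrac12[\varphi((\x+\h,i),(\x+\h,i))-2\varphi((\x+\h,i),(\x,j))+\varphi((\x,j),(\x,j))]$ and simplifying with $\gamma_{ii}(\0)=0$ and $\gamma_{ji}(\x)=\gamma_{ij}(-\x)$, all terms involving the reference component $r$ and all dependence on $\x$ cancel, leaving exactly $\gamma_{ij}(\h)$; in particular the pseudo-variance comes out independent of $\x$, so $\G$ is genuinely realized as a pseudo-variogram. The main obstacle is conceptual rather than computational: recognizing that the stronger conditional negative definiteness of Definition~\ref{def:strongnd} is precisely the scalar hypothesis on $S$ needed to feed into the Schoenberg lemma, and checking that pinning at a single node $s_0$ (together with $\gamma_{rr}(\0)=0$) both reproduces $\G$ and self-consistently forces the required $\x$-independence.
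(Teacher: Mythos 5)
Your proof is correct, and although the covariance kernel at its heart is identical to the paper's, the way you arrive at it is organized differently enough to be worth comparing. For necessity, the paper introduces the auxiliary field $\tilde Z_i(\x)=Z_i(\x)-Z_1(\0)$ and uses $\Cov(\tilde Z_i(\x),\tilde Z_j(\y))=\gamma_{i1}(\x)+\gamma_{j1}(\y)-\gamma_{ij}(\x-\y)$, while you expand $\EE\bigl[\bigl(\sum_{i,p}a_{ip}Z_p(\bb{x_i})\bigr)^2\bigr]$ directly and let the constraint $\bb{1_m^\top}\sum_k\bb{a_k}=0$ annihilate the pure terms; in fact, under that constraint $\sum_i\bb{a_i^\top}\bb{\tilde Z}(\bb{x_i})=\sum_{i,p}a_{ip}Z_p(\bb{x_i})$, so the two computations involve the same random variable. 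The only difference is that your version needs finite second moments of the field values themselves (automatic for a centred Gaussian field, hence harmless for this theorem), whereas the paper's increment-based device proves necessity for arbitrary random fields possessing a pseudo-variogram. For sufficiency, your reduction to the scalar theory --- observing that Definition~\ref{def:strongnd} is exactly classical conditional negative definiteness of the scalar kernel $G((\x,p),(\y,q))=\gamma_{pq}(\x-\y)$ on $\RR^d\times\{1,\dots,m\}$, and then citing the classical Schoenberg lemma --- is precisely what the paper re-proves by hand: its augmented test point $\bb{x_0}=\0$ with weight $\bb{a_0}=(-\bb{1_m^\top}\sum_k\bb{a_k},0,\dots,0)$ is the scalar pinning at $s_0=(\0,1)$ written in matrix notation, and your kernel $\gamma_{pr}(\x)+\gamma_{qr}(\y)-\gamma_{pq}(\x-\y)$ is the paper's $\bb{\gamma_1}(\x)\bb{1_m^\top}+\bb{1_m\gamma_1^\top}(\y)-\G(\x-\y)$ with $r=1$. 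Your scalarization buys conceptual economy and reusability (the same identification yields Theorem~\ref{thm:schoenberg} directly from the classical scalar result), while the paper's inline argument keeps the treatment self-contained and isolates the construction as Lemma~\ref{lem:pdndconstruction}, which it later needs as an ``if and only if'' statement. One bookkeeping remark: since the paper's notion of pseudo-variogram presupposes second-order stationary increments, the paper verifies the stronger fact that each process $\x\mapsto Z_i(\x+\h)-Z_j(\x)$ is weakly stationary, not merely that its variance is free of $\x$; your kernel delivers this by the same expansion you already perform (all terms involving the reference index $r$ cancel in any increment covariance), so this is the omission of a sentence, not of an idea.
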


\begin{proof} 
	The proof is analogous to the univariate one in \cite{matheron1972leccon}. Let $\Z$ be an $m$-variate random field with pseudo-variogram $\G$. Obviously, $\gamma_{ii}(\0) = 0$ and $\gamma_{ij}(\h)=\gamma_{ji}(-\h)$ for all $\h \in \RR^d$, $i,j=1,\ldots, m$. Define an $m$-variate random field $\bb{\tilde Z}$ via $\tilde Z_i(\x)=Z_i(\x)-Z_1(\0), \x \in \RR^d, i=1,\dots,m$. Then $\Z$ and $\bb{\tilde Z}$ have the same pseudo-variogram, and  
	$$
	\Cov(\tilde Z_i(\x),\tilde Z_j(\y)) = \gamma_{i1}(x)+\gamma_{j1}(y)-\gamma_{ij}(x-y), $$ cf. also \cite[Equation (6)]{papritz1993},
	i.e.\,$$
	\Cov(\bb{\tilde Z}(\x), \bb{\tilde Z}(\y)) = \bb{\gamma_1}(\x)\bb{1_m^\top} + \bb{1_m
		\gamma_1^\top}(\y) - \gamma(\x - \y), \quad \x, \y \in \RR^d, 
	$$
	with $\bb{\gamma_1}(\x):=(\gamma_{11}(\x),\dots,\gamma_{m1}(\x))^\top$.
	For $\bb{1_m^\top} \sum_{k=1}^m \bb{a_k}=0$, we thus have
	\begin{eqnarray*}
		0 & \le  &    \Var\left(\sum_{i=1}^n \bb{a_i^\top}\bb{\tilde Z}(\bb{x_i})\right)
		=
		\sum_{i=1}^n \sum_{j=1}^n \bb{a_i^\top} \left(\bb{\gamma_1}(\bb{x_i})
		\bb{1_m^\top} +
		\bb{1_m}
		\bb{\gamma_1^\top}(\bb{x_j})- \G(\bb{x_i} -\bb{x_j})\right) \bb{a_j}
		\\&  =& -\sum_{i=1}^n \sum_{j=1}^n \bb{a_i^\top}  \G(\bb{x_i} -\bb{x_j}) \bb{a_j}.
	\end{eqnarray*}
	Now let $\G$ be conditionally negative definite and $\gamma_{ii}(\0)=0$, $i=1,\dots,m$. \\Let  $\bb{a_1},\dots, \bb{a_n} \in \RR^m, \bb{x_1}, \dots,\bb{x_n} \in \RR^d$ be arbitrary, $\bb{x_0}=\0 \in \RR^d$ and\\ 
	$\bb{a_0} = \left(-\bb{1_m^\top}\sum_{k=1}^n \bb{a_k}, 0, \dots,0\right)\in \RR^{m}$. Then
	\begin{align*}
	0 &\le - \sum_{i=0}^n \sum_{j=0}^n   \bb{a_i^\top} \G(\bb{x_i}-\bb{x_j}) \bb{a_j}
	\\   
	&= - \sum_{i=1}^n \sum_{j=1}^n \bb{a_i^\top} \G(\bb{x_i} -\bb{x_j}) \bb{a_j}
	- \sum_{i=1}^n   \bb{a_i^\top}\G(\bb{x_i} -\bb{x_0}) \bb{a_0}
	-  \sum_{j=1}^n   \bb{a_0^\top}  \G(\bb{x_0} -\bb{x_j}) \bb{a_j} \\
	&~~~~- \bb{a_0^\top} \G(\bb{x_0} -\bb{x_0}) \bb{a_0}.
	\end{align*}
	Since $\gamma_{11}(\0)=0$, 
	and $\bb{a_0^\top} \G(\bb{x_0} -\bb{x_j}) \bb{a_j} =\bb{a_j^\top} \G(\bb{x_j}) \bb{a_0}$ due to property \eqref{eq:hermitian},
	we get that
	\begin{align*}
	0 &\le - \sum_{i=0}^n \sum_{j=0}^n   \bb{a_i^\top} \G(\bb{x_i} -\bb{x_j}) \bb{a_j} \\  
	&= \sum_{i=1}^n \sum_{j=1}^n \bb{a_i^\top}\left(\bb{\gamma_1}(\bb{x_i})\bb{1_m^\top} + \bb{1_m\gamma_1^\top}(\bb{x_j})  -	\G(\bb{x_i} -\bb{x_j})\right) \bb{a_j}, 
	\end{align*}
	i.e.\,$(\x,\y) \mapsto \bb{\gamma_1}(\x)\bb{1_m^\top} + \bb{1_m\gamma_1^\top}(\y) -\G(\x -\y)$ is  a matrix-valued positive definite function.
	Let $\{\Z(\x)=(Z_1(\x),\dots, Z_m(\x))^\top, \x \in \RR^d\}$ be a corresponding centred Gaussian random field.
	We have to show that $\Var\left(Z_i(\x+\h) - Z_j(\x)\right)$ is independent of $\x$ for all $\x, \h \in \RR^d$, $i,j=1,\dots,m$.
	We even show that $\x \mapsto Z_i(\x+\h) - Z_j(\x)$ is
	weakly stationary for $i,j=1,\dots,m$:
	\begin{eqnarray*}
		\lefteqn{ \Cov(Z_i(\x+\h) - Z_j(\x), Z_i(\y+\h) - Z_j(\y))}\\
		&=&
		\gamma_{i1}(\x+\h) + \gamma_{i1}(\y+\h) -\gamma_{ii}(\x-\y) +
		\gamma_{j1}(\x) + \gamma_{j1}(\y) - \gamma_{jj}(\x-\y)
		\\&&{}
		-\gamma_{j1}(\x) - \gamma_{i1}(\y+\h) + \gamma_{ji}(\x-\y-\h) -
		\gamma_{i1}(\x+\h) - \gamma_{j1}(\y) +\gamma_{ij}(\x+\h-\y)
		\\&=&
		-\gamma_{ii}(\x-\y) 
		- \gamma_{jj}(\x-\y)
		+ \gamma_{ji}(\x-\y-\h)
		+\gamma_{ij}(\x-\y+\h)
		. \end{eqnarray*}	
\end{proof}
Theorem \ref{thm:characterization1} answers the questions raised in \cite[p.\,422]{du2012} and also settles a question in \cite[p.\,239]{ma2011class} in a more general framework with regard to the intersection of the sets of pseudo- and cross-variograms. It turns out that this intersection is trivial in the following sense.
\begin{corollary} \label{cor:schnitt} Let $\mathcal{P}=\{\G: \RR^d \rightarrow \RR^{m \times m}\mid \G \text{~pseudo-variogram} \}$, $\mathcal{C}=\{\TG: \RR^d \rightarrow \RR^{m \times m}\mid \TG \text{~cross-variogram} \}.$ Then we have 
	\begin{equation} \label{eq:intersection}  \mathcal{P} \cap \mathcal{C} = \{\bb{1_m1_m^\top} \gamma \mid \gamma: \RR^d \rightarrow \RR \text{~variogram} \}. \end{equation}
\end{corollary}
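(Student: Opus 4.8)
The plan is to establish the two inclusions in \eqref{eq:intersection} separately, letting the nontrivial direction rest entirely on the Gaussian realization supplied by Theorem~\ref{thm:characterization1}.

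For the inclusion $\supseteq$, I would fix a univariate variogram $\gamma$ and check directly that $\bb{1_m1_m^\top}\gamma$ belongs to both $\mathcal{P}$ and $\mathcal{C}$. The only computation is that the matrix quadratic form collapses to a scalar one: writing $b_k=\bb{1_m^\top a_k}$ for test vectors $\bb{a_1},\dots,\bb{a_n}\in\RR^m$, one has $\sum_{i,j}\bb{a_i^\top}\bb{1_m1_m^\top}\bb{a_j}\,\gamma(\bb{x_i}-\bb{x_j})=\sum_{i,j}b_ib_j\,\gamma(\bb{x_i}-\bb{x_j})$. Since $\sum_k b_k=\bb{1_m^\top}\sum_k\bb{a_k}$, the vector-valued side condition (either $\sum_k\bb{a_k}=\0$ for \eqref{eq:weaknd} or $\bb{1_m^\top}\sum_k\bb{a_k}=0$ for \eqref{eq:strongnd}) reduces to the scalar constraint $\sum_k b_k=0$, so conditional negative definiteness of $\gamma$ gives the required inequality in both cases. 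Evenness of $\gamma$ and $\gamma(\0)=0$ dispatch the remaining symmetry and diagonal requirements, so $\bb{1_m1_m^\top}\gamma$ is simultaneously a cross- and a pseudo-variogram.

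For the inclusion $\subseteq$, let $\G\in\mathcal{P}\cap\mathcal{C}$. Because $\G$ is a pseudo-variogram, Theorem~\ref{thm:characterization1} yields a centred Gaussian random field $\Z$ with pseudo-variogram $\G$. The decisive point is that membership in $\mathcal{C}$ forces $\G(\0)=\0$, that is $\gamma_{ij}(\0)=0$ for all $i,j$ and not merely on the diagonal. Reading this through the definition of the pseudo-variogram gives $\Var(Z_i(\x)-Z_j(\x))=2\gamma_{ij}(\0)=0$ for every $\x$, and since $\Z$ is centred this means $Z_i(\x)=Z_j(\x)$ almost surely for all $i,j$ and all $\x$. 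Substituting $Z_i(\x+\h)=Z_1(\x+\h)$ and $Z_j(\x)=Z_1(\x)$ into $\gamma_{ij}(\h)=\tfrac12\Var(Z_i(\x+\h)-Z_j(\x))$ then gives $\gamma_{ij}(\h)=\tfrac12\Var(Z_1(\x+\h)-Z_1(\x))=\gamma_{11}(\h)$ for all $i,j$. Hence $\G=\bb{1_m1_m^\top}\gamma_{11}$, and $\gamma_{11}$ is a bona fide univariate variogram, being a diagonal entry of a pseudo-variogram.

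I expect no serious analytic obstacle; the whole difficulty is conceptual and is dissolved at once by Theorem~\ref{thm:characterization1}. The one point needing a little care is the passage from the pointwise almost-sure identities $Z_i(\x)=Z_j(\x)$ to the second-moment identity used in the variogram computation: since one evaluates only at the two fixed points $\x$ and $\x+\h$, pointwise almost-sure equality is enough and no joint-in-$\x$ statement about sample paths is required. One should also confirm in passing that the resulting variogram does not depend on $\x$, which is guaranteed by the defining property of $\G$ as a pseudo-variogram. A purely analytic alternative is available through the positive definite kernel $(\x,\y)\mapsto\bb{\gamma_1}(\x)\bb{1_m^\top}+\bb{1_m\gamma_1^\top}(\y)-\G(\x-\y)$ constructed in the proof of Theorem~\ref{thm:characterization1}, but the probabilistic route is the cleanest and is the one I would write up.
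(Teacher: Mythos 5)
Your proof is correct, but it follows a genuinely different route from the paper's. The paper argues analytically: for $\G\in\mathcal{P}\cap\mathcal{C}$ it invokes the strong inequality \eqref{eq:strongnd} (available since $\G\in\mathcal{P}$, by Theorem~\ref{thm:characterization1}) with the test vectors $\bb{a_1}=(-1,0)^\top$, $\bb{a_2}=(1-k,k)^\top$, combines it with the Cauchy--Schwarz inequality valid for cross-variograms, and lets $k\to\infty$ to force $\gamma_{11}(\h)=\gamma_{22}(\h)=\gamma_{12}(\h)$. You instead extract only one analytic consequence from cross-variogram membership, namely $\G(\0)=\0$, and feed it into the centred Gaussian realization $\Z$ of $\G$ supplied by Theorem~\ref{thm:characterization1}: vanishing variance of $Z_i(\x)-Z_j(\x)$ collapses all components to a single process almost surely at each fixed location, whence every entry of $\G$ equals $\gamma_{11}$. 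Your treatment of the measure-theoretic point is right --- only the two fixed points $\x$ and $\x+\h$ enter the variance computation, so pointwise almost-sure identities suffice --- and you also verify the inclusion $\supseteq$ explicitly, which the paper leaves implicit. In effect, your argument is a probabilistic rendering of the alternative the paper only mentions in passing: Corollary~\ref{cor:oestingsinequality} applied with $\gamma_{ij}(\0)=0$ yields $\gamma_{ii}(\h)=\gamma_{ij}(\h)$ at once. What the paper's computation buys is quantitative information that does not presuppose degeneracy at the origin (the chain of inequalities \eqref{eq:inequalitycrosspseudo1}--\eqref{eq:inequalitycrosspseudo2} relating the entries of any $\G\in\mathcal{P}\cap\mathcal{C}$); what yours buys is brevity and a transparent conceptual explanation of why the intersection is trivial.
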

\begin{proof}
	Let $\G \in \mathcal{P} \cap \mathcal{C}$. W.l.o.g.\,assume $m=2$. Since $\G \in \mathcal{P}\cap \mathcal{C}$, we have for $n=2$, $\bb{x_1}=\h$, $\bb{x_2}=\0$, $\bb{a_1},\bb{a_2} \in \RR^2$ with $\bb{1_2^\top} \sum_{k=1}^2 \bb{a_k}=0$, using the symmetry of $\G$ and $\G(\0)=\0$,
	\begin{align*}
	0 &\ge \sum_{i=1}^2\sum_{j=1}^2 \bb{a_i^\top} \G(\bb{x_i}-\bb{x_j})\bb{a_j} \\&= 2a_{11}a_{21} \gamma_{11}(\h)+2a_{12}a_{22}\gamma_{22}(\h)+2(a_{11}a_{22}+a_{12}a_{21})\gamma_{12}(\h) .
	\end{align*}
	Choosing $\bb{a_{1}}=(-1,0)^\top$, $\bb{a_2}= (1-k,k)^\top$, $k \ge 2$, and applying the Cauchy-Schwarz inequality due to $\G \in \mathcal{C}$ gives 
	\begin{align}\label{eq:inequalitycrosspseudo1} 
	0 \le \gamma_{11}(\h) \le \frac{-k}{1-k}\gamma_{12}(\h)  \le \frac{1}{1-\frac1k}\sqrt{\gamma_{11}(\h)}\sqrt{\gamma_{22}(\h)}.
	\end{align}
	By symmetry, we also have 
	\begin{align}\label{eq:inequalitycrosspseudo2}
	0 \le \gamma_{22}(\h) &\le \frac{ -k}{1-k}\gamma_{12}(\h) \le \frac{1}{1-\frac1k}\sqrt{\gamma_{11}(\h)}\sqrt{\gamma_{22}(\h)}.
	\end{align}
	Assume first that, w.l.o.g.\,, $\gamma_{11}(\h)=0$. Then, $\gamma_{12}(\h) = 0$ and $\gamma_{22}(\h) =0$ due to inequalities \eqref{eq:inequalitycrosspseudo1} and \eqref{eq:inequalitycrosspseudo2}. Suppose now that $\gamma_{11}(\h), \gamma_{22}(\h) \neq 0$. Letting $k \rightarrow \infty$ in inequalities \eqref{eq:inequalitycrosspseudo1} and \eqref{eq:inequalitycrosspseudo2} yields $\gamma_{11}(\h) = \gamma_{22}(\h)$.
	Inserting this in inequality \eqref{eq:inequalitycrosspseudo2} gives $$ \gamma_{22}(\h) \le \frac{1}{1-\frac1k}\gamma_{12}(\h) \le \frac{1}{1-\frac1k}\gamma_{22}(\h)$$ and consequently the result for $k \rightarrow \infty$. 
\end{proof}
Corollary \ref{cor:schnitt} can also be proved by means of a result in \cite{oesting2017}. In fact, Theorem \ref{thm:characterization1} enables us to reproduce their result, which was originally derived in a stochastic manner, by a direct proof.

\begin{corollary}\label{cor:oestingsinequality} 
	Let $\G: \RR^d \rightarrow \RR^{m \times m}$ be a pseudo-variogram. Then $\G$ fulfils
	$$\left(\sqrt{\gamma_{ii}(\h)}-\sqrt{\gamma_{ij}(\h)}\right)^2 \le \gamma_{ij}(\0), \quad \h \in \RR^d, \quad i,j=1,\dots,m.$$
\end{corollary}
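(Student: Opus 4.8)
The plan is to give a deterministic proof resting on Theorem~\ref{thm:characterization1}, rather than on the stochastic argument of \cite{oesting2017}. Fix indices $i,j$ and a lag $\h \in \RR^d$. Since $\G$ is a pseudo-variogram, Theorem~\ref{thm:characterization1} guarantees that $\gamma_{ii}(\0)=\gamma_{jj}(\0)=0$, that the symmetry \eqref{eq:hermitian} holds, and that $\G$ is conditionally negative definite; moreover $\gamma_{ii}(\h),\gamma_{ij}(\h)\ge 0$ as noted before Theorem~\ref{thm:characterization1}, so all square roots below are well-defined. The idea is to feed a one-parameter family of admissible test vectors into inequality~\eqref{eq:strongnd} and to read off a Cauchy--Schwarz inequality from the resulting quadratic form.

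Concretely, for $s,t \in \RR$ I would apply Definition~\ref{def:strongnd} with $n=2$, $\bb{x_1}=\h$, $\bb{x_2}=\0$, and
\begin{equation*}
\bb{a_1}=(s+t)\bb{e_i}, \qquad \bb{a_2}=-s\bb{e_i}-t\bb{e_j},
\end{equation*}
where $\{\bb{e_1},\dots,\bb{e_m}\}$ is the canonical basis of $\RR^m$. The crucial point is that $\bb{1_m^\top}(\bb{a_1}+\bb{a_2}) = t-t = 0$, so this family is admissible even though $\bb{a_1}+\bb{a_2}\neq \0$ in general; this is exactly where the stronger notion of conditional negative definiteness (as opposed to \eqref{eq:weaknd}) is needed. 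Expanding $\sum_{k,l=1}^2 \bb{a_k^\top}\G(\bb{x_k}-\bb{x_l})\bb{a_l}$ and simplifying with $\gamma_{ii}(\0)=\gamma_{jj}(\0)=0$ and \eqref{eq:hermitian} (which gives $\gamma_{ii}(-\h)=\gamma_{ii}(\h)$, $\gamma_{ji}(-\h)=\gamma_{ij}(\h)$ and $\gamma_{ji}(\0)=\gamma_{ij}(\0)$), the sum collapses to
\begin{equation*}
-\bigl(2\gamma_{ii}(\h)\,s^2 + 2\bigl(\gamma_{ii}(\h)+\gamma_{ij}(\h)-\gamma_{ij}(\0)\bigr)st + 2\gamma_{ij}(\h)\,t^2\bigr),
\end{equation*}
which is $\le 0$ by \eqref{eq:strongnd}.

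Since this holds for all $s,t\in\RR$, the symmetric $2\times2$ matrix with diagonal entries $2\gamma_{ii}(\h)$, $2\gamma_{ij}(\h)$ and off-diagonal entry $\gamma_{ii}(\h)+\gamma_{ij}(\h)-\gamma_{ij}(\0)$ is positive semidefinite, so its determinant is non-negative:
\begin{equation*}
\bigl(\gamma_{ii}(\h)+\gamma_{ij}(\h)-\gamma_{ij}(\0)\bigr)^2 \le 4\,\gamma_{ii}(\h)\,\gamma_{ij}(\h).
\end{equation*}
Taking square roots and keeping the upper branch gives $\gamma_{ii}(\h)+\gamma_{ij}(\h)-\gamma_{ij}(\0) \le 2\sqrt{\gamma_{ii}(\h)\gamma_{ij}(\h)}$, which rearranges precisely to the claimed bound $(\sqrt{\gamma_{ii}(\h)}-\sqrt{\gamma_{ij}(\h)})^2 \le \gamma_{ij}(\0)$. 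I expect the main obstacle to be the first step: choosing the two test vectors and the scalar parametrization $(s,t)$ so that the admissibility constraint $\bb{1_m^\top}\sum_k \bb{a_k}=0$ is met while the three relevant quantities $\gamma_{ii}(\h)$, $\gamma_{ij}(\h)$ and $\gamma_{ij}(\0)$ surface with exactly the coefficients of the target Cauchy--Schwarz minor; once the family above is in hand, the remainder is routine bookkeeping with \eqref{eq:hermitian} and the vanishing of the diagonal at $\0$.
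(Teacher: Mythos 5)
Your proof is correct, and I checked the key computation: with $\bb{a_1}=(s+t)\bb{e_i}$, $\bb{a_2}=-s\bb{e_i}-t\bb{e_j}$, $\bb{x_1}=\h$, $\bb{x_2}=\0$, the double sum does collapse to $-2\bigl(\gamma_{ii}(\h)s^2+(\gamma_{ii}(\h)+\gamma_{ij}(\h)-\gamma_{ij}(\0))st+\gamma_{ij}(\h)t^2\bigr)$, using $\gamma_{ii}(\0)=\gamma_{jj}(\0)=0$, $\gamma_{ij}(\0)=\gamma_{ji}(\0)$ and \eqref{eq:hermitian}. In substance this is the paper's argument: your test family is exactly the paper's (which, for $i=1$, $j=2$, takes $a_{12}=0$ and $a_{21}=-(a_{11}+a_{22})$) under the reparametrization $a_{11}=s+t$, $a_{22}=-t$, and both proofs hinge on the same point that the zero-total-sum constraint of Definition~\ref{def:strongnd}, as opposed to \eqref{eq:weaknd}, admits these vectors. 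Where you genuinely differ is the extraction step. The paper restricts the signs of the parameters, reduces to the single ratio $x=a_{11}/a_{22}$, and maximizes $x\mapsto -x\gamma_{11}(\h)+\frac{x}{1+x}\gamma_{12}(\h)$ by calculus, which is why it must assume $\gamma_{11}(\h),\gamma_{12}(\h)>0$ and presents only that case. You keep the full two-parameter quadratic form, identify its nonnegativity over all $(s,t)$ with positive semidefiniteness of the associated $2\times 2$ matrix, and take the determinant. This buys you something concrete: no calculus, no sign restrictions, and the degenerate cases $\gamma_{ii}(\h)=0$ or $\gamma_{ij}(\h)=0$ are covered uniformly, since the final rearrangement $\gamma_{ii}(\h)+\gamma_{ij}(\h)-\gamma_{ij}(\0)\le 2\sqrt{\gamma_{ii}(\h)\gamma_{ij}(\h)}$ never divides by either quantity. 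So your write-up is the same route with a cleaner endgame that in fact proves the full statement, whereas the paper's printed proof explicitly waives the boundary cases.
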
  

\begin{proof}
	W.l.o.g.\,assume $m=2$. We only present the proof for $i=1, j=2$ and $\gamma_{11}(\h)$, $\gamma_{12}(\h) > 0$. We then have for $n=2$, $\bb{x_1}=\h$, $\bb{x_2}=\0$, $\bb{a_1},\bb{a_2} \in \RR^2$ with $\bb{1_2^\top} \sum_{k=1}^2 \bb{a_k}=0$, 
	\begin{eqnarray} \label{eq:aux1}
	0 &\ge& a_{11}a_{21} \gamma_{11}(\h)+a_{12}a_{22}\gamma_{22}(\h) +\nonumber \\ &&a_{11}a_{22}\gamma_{12}(\h) + a_{12}a_{21}\gamma_{21}(\h) + (a_{11}a_{12}+a_{21}a_{22})\gamma_{12}(\0).
	\end{eqnarray}
	Assuming $a_{12}=0$, $a_{22}>0$ and $a_{11} + a_{22} = -a_{21} >0$, inequality \eqref{eq:aux1} simplifies to
	\begin{align} \label{eq:aux2}
	\gamma_{12}(\0) &\ge -\frac{a_{11}}{a_{22}} \gamma_{11}(\h) + \frac{a_{11}}{a_{11}+a_{22}}\gamma_{12}(\h) \nonumber \\
	&= -x \gamma_{11}(\h) + \frac x{1+x}\gamma_{12}(\h)
	\end{align}	
	for $x := \frac{a_{11}}{a_{22}}$. Maximization of the function $x \mapsto -x \gamma_{11}(\h) + \frac x{1+x}\gamma_{12}(\h), x > -1$, leads to $x^{\ast} = \sqrt{\frac{\gamma_{12}(\h)}{\gamma_{11}(\h)}} -1$. Inserting $x^\ast$ into \eqref{eq:aux2} gives 
	\begin{align*}
	\gamma_{12}(\0) &\ge -\left(\sqrt{\frac{\gamma_{12}(\h)}{\gamma_{11}(\h)}}-1\right) \gamma_{11}(\h) + \left( \frac {\sqrt{\frac{\gamma_{12}(\h)}{\gamma_{11}(\h)}} -1}{1+\sqrt{\frac{\gamma_{12}(\h)}{\gamma_{11}(\h)}} -1}     \right)\gamma_{12}(\h) \\ &= \left(\sqrt{\gamma_{11}(\h)}-\sqrt{\gamma_{12}(\h)}\right)^2. 
	\end{align*}
\end{proof}

\section{A Schoenberg-type characterization} \label{sec:schoenberg} 

The proof of Theorem \ref{thm:characterization1} contains an important relation between matrix-valued positive definite and conditionally negative definite functions we have not emphasized yet. Due to its significance, we formulate it in a separate lemma. In fact, the assumption on the main diagonal stemming from our consideration of pseudo-variograms can be dropped, resulting in a matrix-valued counterpart of Lemma 2.1 in \cite{berg1984harmonic}.

\begin{lemma} \label{lem:pdndconstruction}
	Let $\G: \RR^d \rightarrow \RR^{m \times m}$ be a matrix-valued function with $\gamma_{ij}(\h)=\gamma_{ji}(-\h)$, $i,j=1,\dots,m$. Define $$\bb{C_k}(\x,\y):=\bb{\gamma_k}(\x)\bb{1_m^\top} + \bb{1_m}\bb{\gamma_k^\top}(\y) -\G(\x -\y)-\gamma_{kk}(\0)\bb{1_m\mathbf{1}_m^\top}$$ with $\bb{\gamma_k}(\h)=(\gamma_{1k}(\h),\dots,\gamma_{mk}(\h))^\top$, $k \in \{1,\dots,m\}$. Then $\bb{C_k}$ is a positive definite matrix-valued kernel for $k \in \{1,\dots,m\}$, if and only if $\G$ is conditionally negative definite. 
	If $\gamma_{kk}(\0) \ge 0$ for $k=1,\dots,m$, then $$\bb{\tilde{C_k}}(\x,\y):=\bb{\gamma_k}(\x)\bb{1_m^\top} + \bb{1_m}\bb{\gamma_k^\top}(\y) -\G(\x -\y)$$ is a positive definite matrix-valued kernel for $k \in \{1,\dots,m\}$, if and only if $\G$ is conditionally negative definite. 
\end{lemma}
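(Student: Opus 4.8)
The plan is to reduce the whole statement to a single bilinear identity connecting the quadratic form of $\bb{C_k}$ with that of $\G$ on an enlarged point configuration, in exactly the spirit of the augmentation already used in the proof of Theorem~\ref{thm:characterization1} (where the extra node $\bb{a_0}$ with first coordinate $-\bb{1_m^\top}\sum_k \bb{a_k}$ appeared). Recall that a matrix-valued kernel $\bb{K}(\x,\y)$ is positive definite if $\sum_{i,j=1}^n \bb{a_i^\top}\bb{K}(\bb{x_i},\bb{x_j})\bb{a_j}\ge 0$ for every $n$, all nodes $\bb{x_1},\dots,\bb{x_n}\in\RR^d$ and all $\bb{a_1},\dots,\bb{a_n}\in\RR^m$. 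I would first note that property~\eqref{eq:hermitian} gives $\G(\h)^\top = \G(-\h)$, whence $\bb{C_k}(\y,\x)=\bb{C_k}(\x,\y)^\top$, so each $\bb{C_k}$ is a genuinely Hermitian kernel and only the sign of its quadratic form is at issue.

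The core step is the following identity. Fix $k$, take arbitrary nodes $\bb{x_1},\dots,\bb{x_n}$ and vectors $\bb{a_1},\dots,\bb{a_n}$, write $s:=\bb{1_m^\top}\sum_{i=1}^n\bb{a_i}$, and adjoin the node $\bb{x_0}=\0$ with $\bb{a_0}=-s\,\bb{e_k}$. Then $\bb{1_m^\top}\sum_{i=0}^n\bb{a_i}=0$, and I claim
\[
\sum_{i=1}^n\sum_{j=1}^n \bb{a_i^\top}\bb{C_k}(\bb{x_i},\bb{x_j})\bb{a_j} \;=\; -\sum_{i=0}^n\sum_{j=0}^n \bb{a_i^\top}\G(\bb{x_i}-\bb{x_j})\bb{a_j}.
\]
To verify this I would expand the left-hand side: the two rank-one terms of $\bb{C_k}$ collapse to $2s\sum_i\bb{a_i^\top}\bb{\gamma_k}(\bb{x_i})$ and the correction term to $-\gamma_{kk}(\0)s^2$. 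On the right-hand side I would split off the $i=0$ and $j=0$ contributions; here property~\eqref{eq:hermitian} is what converts the extracted $k$-th row and column of $\G$ back into $\bb{\gamma_k}$, via $\bb{e_k^\top}\G(-\bb{x_j})\bb{a_j}=\bb{\gamma_k^\top}(\bb{x_j})\bb{a_j}$ and $\bb{a_i^\top}\G(\bb{x_i})\bb{e_k}=\bb{a_i^\top}\bb{\gamma_k}(\bb{x_i})$, producing two terms $-s\sum_i\bb{a_i^\top}\bb{\gamma_k}(\bb{x_i})$ and the corner term $\bb{a_0^\top}\G(\0)\bb{a_0}=\gamma_{kk}(\0)s^2$. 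The one place requiring care — the main, if modest, obstacle — is precisely this diagonal contribution: the subtracted $\gamma_{kk}(\0)\bb{1_m1_m^\top}$ in the definition of $\bb{C_k}$ is engineered so that $-\gamma_{kk}(\0)s^2$ on the left cancels against the corner term $+\gamma_{kk}(\0)s^2$ on the right, which is exactly why the correction appears.

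Granting the identity, both equivalences follow at once. If $\G$ is conditionally negative definite, then applying \eqref{eq:strongnd} to the augmented family (which obeys the constraint) makes the right-hand side nonnegative, so $\bb{C_k}$ is positive definite for every $k$. Conversely, if $\bb{C_k}$ is positive definite for a single $k$, then for any family with $\bb{1_m^\top}\sum_i\bb{a_i}=0$ we have $s=0$, hence $\bb{a_0}=\0$ and the adjoined node drops out, so the identity reduces to $\sum_{i,j}\bb{a_i^\top}\bb{C_k}(\bb{x_i},\bb{x_j})\bb{a_j}=-\sum_{i,j}\bb{a_i^\top}\G(\bb{x_i}-\bb{x_j})\bb{a_j}$; nonnegativity of the left side yields \eqref{eq:strongnd}, and \eqref{eq:hermitian} holds by hypothesis, so $\G$ is conditionally negative definite.

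For the second assertion I would write $\bb{\tilde{C_k}}=\bb{C_k}+\gamma_{kk}(\0)\bb{1_m1_m^\top}$ and observe that, when $\gamma_{kk}(\0)\ge 0$, the constant summand $(\x,\y)\mapsto\gamma_{kk}(\0)\bb{1_m1_m^\top}$ is itself a positive definite kernel, since its quadratic form equals $\gamma_{kk}(\0)s^2\ge 0$. Hence conditional negative definiteness of $\G$ gives positive definiteness of $\bb{C_k}$ by the first part, and therefore of the sum $\bb{\tilde{C_k}}$. For the reverse direction I would again restrict to zero-sum families $s=0$, on which the added term contributes nothing, so positive definiteness of $\bb{\tilde{C_k}}$ forces \eqref{eq:strongnd} exactly as above; together with \eqref{eq:hermitian} this gives conditional negative definiteness of $\G$, completing the proof.
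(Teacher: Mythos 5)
Your proof is correct and follows essentially the same route as the paper: the paper proves this lemma implicitly inside the proof of Theorem~\ref{thm:characterization1}, by the very same augmentation (adjoining $\bb{x_0}=\0$ and $\bb{a_0}=-s\,\bb{e_k}$, there with $k=1$ and $\gamma_{11}(\0)=0$ so the corner term vanishes), and your identity merely makes explicit how the correction term $-\gamma_{kk}(\0)\bb{1_m\mathbf{1}_m^\top}$ absorbs the corner contribution $\bb{a_0^\top}\G(\0)\bb{a_0}$ when $\gamma_{kk}(\0)\neq 0$. The converse via zero-sum families (where $s=0$ makes the rank-one and constant terms drop out) likewise mirrors the paper's forward computation, so nothing is missing.
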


The kernel construction in Lemma \ref{lem:pdndconstruction} enables us to prove an analogue to Schoenberg's result of great significance \cite{berg1984harmonic}. 

\begin{theorem}\label{thm:schoenberg}
	A function $\G: \RR^d \rightarrow \RR^{m \times m}$ is conditionally negative definite, if and only if $\exp^\ast(-t\G)$, with $\exp^\ast(-t\G(\h))_{ij} := \exp(-t\gamma_{ij}(\h))$, is positive definite for all $t >0$.
\end{theorem}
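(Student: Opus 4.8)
The plan is to set up a matrix-valued analogue of the classical Schoenberg argument, using Lemma~\ref{lem:pdndconstruction} in place of Lemma~2.1 of \cite{berg1984harmonic} and the Schur product theorem as the multiplicative engine. The two directions are of very different character: the implication ``$\exp^\ast(-t\G)$ positive definite $\Rightarrow$ $\G$ conditionally negative definite'' is a short differentiation argument, whereas the converse carries the weight of the theorem.

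For the differentiation direction, suppose $\exp^\ast(-t\G)$ is positive definite for all $t>0$. The symmetry \eqref{eq:hermitian} is read off from the defining symmetry $\bb{C}(-\h)=\bb{C}(\h)^\top$ of a (stationary) positive definite matrix-valued kernel, which for $\bb{C}=\exp^\ast(-t\G)$ reads $\exp(-t\gamma_{ij}(-\h))=\exp(-t\gamma_{ji}(\h))$ and hence $\gamma_{ij}(\h)=\gamma_{ji}(-\h)$. For \eqref{eq:strongnd}, I would fix a test family with $\bb{1_m^\top}\sum_k\bb{a_k}=0$ and exploit that the all-ones matrix contributes nothing under precisely this constraint:
\[
\sum_{i,j}\bb{a_i^\top}\bb{1_m 1_m^\top}\bb{a_j}=\Big(\bb{1_m^\top}\sum_i\bb{a_i}\Big)^2=0 .
\]
Consequently $\tfrac1t\sum_{i,j}\bb{a_i^\top}\big(\bb{1_m 1_m^\top}-\exp^\ast(-t\G(\bb{x_i}-\bb{x_j}))\big)\bb{a_j}\le 0$ for every $t>0$, and since $t^{-1}(1-\exp(-t\gamma_{ij}(\h)))\to\gamma_{ij}(\h)$ entrywise as $t\downarrow 0$, the limit yields exactly $\sum_{i,j}\bb{a_i^\top}\G(\bb{x_i}-\bb{x_j})\bb{a_j}\le 0$.

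For the converse, let $\G$ be conditionally negative definite. By Lemma~\ref{lem:pdndconstruction} with $k=1$, the kernel $\bb{C_1}(\x,\y)=\bb{\gamma_1}(\x)\bb{1_m^\top}+\bb{1_m}\bb{\gamma_1^\top}(\y)-\G(\x-\y)-\gamma_{11}(\0)\bb{1_m 1_m^\top}$ is positive definite. Reading off entries gives $\gamma_{ij}(\x-\y)=\gamma_{i1}(\x)+\gamma_{j1}(\y)-\gamma_{11}(\0)-(\bb{C_1}(\x,\y))_{ij}$, hence the factorization
\[
\exp(-t\gamma_{ij}(\x-\y))=\exp(-t\gamma_{i1}(\x))\,\exp(-t\gamma_{j1}(\y))\,\exp(t\gamma_{11}(\0))\,\exp\!\big(t(\bb{C_1}(\x,\y))_{ij}\big).
\]
The idea is to realize each factor as a block of a positive semidefinite $nm\times nm$ matrix. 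Fixing $\bb{x_1},\dots,\bb{x_n}$, form $M_{(i,k),(j,l)}=\exp(-t\gamma_{kl}(\bb{x_i}-\bb{x_j}))$, so that positive definiteness of $\exp^\ast(-t\G)$ is the statement $M\succeq 0$. The factorization exhibits $M$ as $\exp(t\gamma_{11}(\0))$ times the entrywise product of the rank-one matrix $\bb{v}\bb{v}^\top$, with $\bb{v}_{(i,k)}=\exp(-t\gamma_{k1}(\bb{x_i}))$, and the matrix $E_{(i,k),(j,l)}=\exp(t(\bb{C_1}(\bb{x_i},\bb{x_j}))_{kl})$. Since $\bb{C_1}$ is positive definite, $B_{(i,k),(j,l)}=t(\bb{C_1}(\bb{x_i},\bb{x_j}))_{kl}$ is positive semidefinite, and writing $E=\sum_{p\ge0}B^{\odot p}/p!$ as a convergent series of Hadamard powers (with $B^{\odot 0}$ the all-ones matrix) shows $E\succeq 0$; the Schur product theorem then delivers $M\succeq 0$.

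The main obstacle will be the bookkeeping across the two index levels, spatial locations and vector components: one must check that the entrywise exponential $\exp^\ast$ coincides with the ordinary Hadamard product on the full $nm\times nm$ matrices, and that positive definiteness of the matrix-valued kernel $\bb{C_1}$ translates precisely into semidefiniteness of the block matrix $B$. Once this identification is pinned down, every remaining step collapses to the scalar Schur product theorem, the convergence of the exponential series for positive semidefinite matrices, and the elementary facts that $\bb{v}\bb{v}^\top\succeq 0$ and that the scalar prefactor is positive.
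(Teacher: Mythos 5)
Your proof is correct and takes essentially the same route as the paper: the hard direction rests on Lemma~\ref{lem:pdndconstruction} with $k=1$ followed by closure of positive definiteness under Hadamard products, sums, and pointwise limits (your block-matrix formulation with the Schur product theorem and the entrywise exponential series is exactly the explicit rendering of that closure argument), and the easy direction is the same difference-quotient limit $\bigl(\bb{1_m1_m^\top}-\exp^\ast(-t\G)\bigr)/t \to \G$ as $t \downarrow 0$ used in the paper.
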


\begin{remark}  Theorem \ref{thm:schoenberg} has also been recently found in \cite{gesztesy2016conditional} in terms of conditionally positive definite matrix-valued functions with complex entries. We give a proof of the result nonetheless, since our proof for the only if part explicitly involves a kernel construction, namely the one from Lemma \ref{lem:pdndconstruction}. Thereby, our kernel provides an "if and only if"-statement in Lemma \ref{lem:pdndconstruction}, whereas the natural multivariate analogue to Schoenberg's kernel which is also considered in \cite{gesztesy2016conditional}, fails to do so, see \cite[Remark 4.10]{gesztesy2016conditional}. We repeat the arguments for the if part, as the construction used there will reappear in the sequel.
\end{remark}

\begin{proof}[Proof of Theorem \ref{thm:schoenberg}]
	Assume that $\G$ is conditionally negative definite. Then, 	
	$$(\x,\y) \mapsto \bb{\gamma_1}(\x)\bb{1_m^\top} + \bb{1_m\gamma_1^\top}(\y)  -
	\G(\x -\y) -\gamma_{11}(\0)\bb{1_m\mathbf{1}_m^\top}
	$$
	is a positive definite kernel due to Lemma~\ref{lem:pdndconstruction}. Since positive definite matrix-valued functions are closed with regard to sums, Hadamard products and pointwise limits, the function
	\begin{align*}(\x,\y) &\mapsto \exp^\ast\left(t\bb{\gamma_1}(\x)\bb{1_m^\top} + t\bb{1_m\gamma_1^\top}(\y)  -
	t\G(\x -\y)-t\gamma_{11}(\0)\bb{1_m\mathbf{1}_m^\top}\right) \\
	&=\exp(-t\gamma_{11}(\0))\exp^\ast\left(t\bb{\gamma_1}(\x)\bb{1_m^\top} + t\bb{1_m\gamma_1^\top}(\y)  -
	t\G(\x -\y)\right)
	\end{align*}
	is again positive definite for all $t >0$. The same holds true for the function
	$(\x,\y) \mapsto  \exp^*\left(-t\bb{\gamma_1}(\x)\bb{1_m^\top} - t\bb{1_m}\bb{\gamma_1^\top}(\y)\right)$ by standard arguments. Using the stability of positive definite functions under Hadamard products again, the first part of the assertion follows. 
	
	Assume now that $\exp^\ast(-t\G)$
	is a positive definite function for all $t>0$. Then, $\exp(-t\gamma_{ij}(\h))=\exp(-t\gamma_{ji}(-\h))$, and thus,
	$$
	\left( \frac{1 - e^{-t\gamma_{ij}}}t\right)_{i,j=1,\ldots,m}
	= \frac{\bb{1_m 1_m^\top} - \exp^\ast(-t\G)}t
	$$
	is a conditionally negative definite function. The assertion follows for $t\rightarrow 0$.
	
\end{proof}

Combining Theorems \ref{thm:characterization1} and \ref{thm:schoenberg}, and recalling that the classes of matrix-valued positive definite functions and covariance functions for multivariate random fields coincide, we immediately get the following characterization of pseudo-variograms.

\begin{corollary} \label{cor:pseudoschoenberg}
	A function $\G: \RR^d \rightarrow \RR^{m \times m}$ is a pseudo-variogram, if and only if $\exp^\ast(-t\G)$ is a matrix-valued correlation function for all $t>0$.
\end{corollary}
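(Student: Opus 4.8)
The plan is to read the claim off directly from the two characterizations already established, namely Theorem~\ref{thm:characterization1} (a function is a pseudo-variogram if and only if $\gamma_{ii}(\0)=0$ for all $i$ and $\G$ is conditionally negative definite) together with Theorem~\ref{thm:schoenberg} (conditional negative definiteness of $\G$ is equivalent to positive definiteness of $\exp^\ast(-t\G)$ for all $t>0$), and to use the stated identification of matrix-valued positive definite functions with covariance functions of multivariate random fields. The single extra ingredient is to match the normalization condition $\gamma_{ii}(\0)=0$ with the requirement that $\exp^\ast(-t\G)$ be a \emph{correlation} rather than merely a covariance function.

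For the \textbf{only if} direction I would start from a pseudo-variogram $\G$. Theorem~\ref{thm:characterization1} gives $\gamma_{ii}(\0)=0$ for all $i$ and that $\G$ is conditionally negative definite, whence Theorem~\ref{thm:schoenberg} makes $\exp^\ast(-t\G)$ positive definite, i.e.\ a matrix-valued covariance function, for every $t>0$. It then remains to observe that its diagonal at the origin is $\exp^\ast(-t\G(\0))_{ii}=\exp(-t\gamma_{ii}(\0))=\exp(0)=1$, so the covariance function has unit variances and is therefore a correlation function.

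For the \textbf{if} direction, assume $\exp^\ast(-t\G)$ is a correlation function for all $t>0$. In particular it is positive definite for all $t>0$, so Theorem~\ref{thm:schoenberg} yields that $\G$ is conditionally negative definite. The correlation normalization forces $\exp(-t\gamma_{ii}(\0))=1$ for all $t>0$ and all $i$, which is possible only if $\gamma_{ii}(\0)=0$. Theorem~\ref{thm:characterization1} then identifies $\G$ as a pseudo-variogram, completing the equivalence.

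I do not expect a genuine obstacle here: both directions are essentially a concatenation of the preceding theorems with the positive-definite/covariance dictionary. The only point that requires a moment's care — and the one I would state explicitly — is the translation between the analytic normalization (unit diagonal of $\exp^\ast(-t\G)$ at $\0$) and the condition $\gamma_{ii}(\0)=0$ on the pseudo-variogram; since $t$ ranges over all positive reals, the exponential relation pins this down cleanly in both directions.
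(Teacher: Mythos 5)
Your proposal is correct and follows exactly the paper's route: the paper also obtains this corollary by combining Theorem~\ref{thm:characterization1} with Theorem~\ref{thm:schoenberg} and the identification of matrix-valued positive definite functions with multivariate covariance functions. The only difference is that you spell out the normalization step (matching $\gamma_{ii}(\0)=0$ with the unit diagonal of $\exp^\ast(-t\G)$ at the origin), which the paper leaves implicit in the word ``correlation.''
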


Corollary \ref{cor:pseudoschoenberg} establishes a direct link between matrix-valued correlation functions and pseudo-variograms. Together with Corollary \ref{cor:schnitt}, it shows that the cross-variograms for which Theorem 10 in \cite{ma2011vector} holds, are necessarily of the form \eqref{eq:intersection}, and it explains the findings in the first part of Remark 2 in \cite{schlather2010}.

Theorem \ref{thm:schoenberg} can be further generalized for conditionally negative definite functions with non-negative components in terms of componentwise Laplace transforms, providing a matrix-valued version of \cite[Theorem 2.3]{berg1984harmonic}.

\begin{theorem} \label{thm:schoenberglaplace}
	Let $\mu$ be a probability measure on $[0,\infty)$ such that $0 < \int_0^\infty s d\mu(s) < \infty$. Let $\mathcal{L}$ denote its Laplace transform, i.e.\,$\mathcal{L}\mu(x)=\int_0^\infty \exp(-sx) d\mu(s), x \in [0,\infty)$. Then $\G: \RR^d \rightarrow [0,\infty)^{m \times m}$ is conditionally negative definite, if and only if $\left(\mathcal{L}\mu(t\gamma_{ij})\right)_{i,j=1,\dots,m}$ is positive definite for all $t >0$. In particular, $\G$ is a pseudo-variogram, if and only if $\left(\mathcal{L}\mu(t\gamma_{ij})\right)_{i,j=1,\dots,m}$ is an $m$-variate correlation function for all $t >0$.
\end{theorem}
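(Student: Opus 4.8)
The plan is to reduce both directions to the already established Schoenberg-type result, Theorem~\ref{thm:schoenberg}, by writing the componentwise Laplace transform as a mixture of the exponentials $\exp^\ast(-st\G)$. Throughout I use that $\mu$ is a probability measure, so that $\mathcal{L}\mu(0)=1$, and that the assumption $0<\int_0^\infty s\,d\mu(s)<\infty$ forces $\mu$ to put positive mass on $(0,\infty)$; hence $\mathcal{L}\mu$ is strictly decreasing and, in particular, injective on $[0,\infty)$. This injectivity is what will let me transfer pointwise identities from the Laplace transforms back to the entries $\gamma_{ij}$.

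For the \emph{only if} direction, suppose $\G$ is conditionally negative definite. By Theorem~\ref{thm:schoenberg}, $\exp^\ast(-r\G)$ is positive definite for every $r>0$, and $\exp^\ast(-0\cdot\G)=\bb{1_m1_m^\top}$ is positive definite as well. For fixed $t>0$ I would then write
$$\left(\mathcal{L}\mu(t\gamma_{ij}(\h))\right)_{i,j=1,\dots,m}=\int_0^\infty \exp^\ast(-st\G(\h))\,d\mu(s),$$
and invoke the closure of the cone of positive definite matrix-valued functions under integration (already used in the paper) to conclude that the left-hand side is positive definite. This direction is essentially immediate once the representation is in place.

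The \emph{if} direction is where the first-moment hypothesis does the real work, and is the step I expect to be the main obstacle. Assume $(\mathcal{L}\mu(t\gamma_{ij}))_{i,j}$ is positive definite for every $t>0$. Positive definiteness first yields $\mathcal{L}\mu(t\gamma_{ij}(\h))=\mathcal{L}\mu(t\gamma_{ji}(-\h))$, whence $\gamma_{ij}(\h)=\gamma_{ji}(-\h)$ by injectivity of $\mathcal{L}\mu$, so \eqref{eq:hermitian} holds. Next, exactly as in the proof of Theorem~\ref{thm:schoenberg}, the difference quotient
$$\frac{\bb{1_m1_m^\top}-\left(\mathcal{L}\mu(t\gamma_{ij})\right)_{i,j}}{t}$$
is conditionally negative definite for each $t>0$, since $\sum_{i,j}\bb{a_i^\top}\bb{1_m1_m^\top}\bb{a_j}=(\bb{1_m^\top}\sum_k\bb{a_k})^2=0$ under the admissibility constraint, leaving only the negative of a positive definite form. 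The crux is the passage $t\downarrow 0$: using $1-e^{-x}\le x$ together with $\gamma_{ij}\ge 0$, the integrand $t^{-1}(1-\exp(-st\gamma_{ij}(\h)))$ is dominated by $s\gamma_{ij}(\h)$, which is $\mu$-integrable precisely because $\int_0^\infty s\,d\mu(s)<\infty$; dominated convergence then gives the entrywise limit $\gamma_{ij}(\h)\int_0^\infty s\,d\mu(s)$. Since conditionally negative definite functions are closed under pointwise limits and under multiplication by positive constants, dividing by $\int_0^\infty s\,d\mu(s)>0$ shows that $\G$ is conditionally negative definite. The points that require care are the uniform domination needed for the limit and the use of $\int_0^\infty s\,d\mu(s)>0$ to rule out a degenerate (identically zero) limit.

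Finally, for the \emph{in particular} assertion I would combine the equivalence just proved with the characterization in Theorem~\ref{thm:characterization1}. A positive definite matrix-valued function is a correlation function exactly when its diagonal equals $1$ at $\h=\0$; here $\mathcal{L}\mu(t\gamma_{ii}(\0))=1$ holds for some, equivalently all, $t>0$ if and only if $\gamma_{ii}(\0)=0$, again by injectivity of $\mathcal{L}\mu$ and $\mathcal{L}\mu(0)=1$. Thus $(\mathcal{L}\mu(t\gamma_{ij}))_{i,j}$ is a correlation function for all $t>0$ if and only if $\G$ is conditionally negative definite and $\gamma_{ii}(\0)=0$ for all $i$, which by Theorem~\ref{thm:characterization1} is exactly the statement that $\G$ is a pseudo-variogram.
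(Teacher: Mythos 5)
Your proof is correct and follows essentially the same route as the paper's: the forward direction writes $\left(\mathcal{L}\mu(t\gamma_{ij})\right)_{i,j}$ as a $\mu$-mixture of the positive definite functions $\exp^\ast(-st\G)$ from Theorem~\ref{thm:schoenberg}, and the converse uses the difference quotient $t^{-1}\bigl(\bb{1_m1_m^\top}-\left(\mathcal{L}\mu(t\gamma_{ij})\right)_{i,j}\bigr)$ with the domination $t^{-1}\lvert 1-\exp(-st\gamma_{ij})\rvert\le s\gamma_{ij}$ and dominated convergence as $t\downarrow 0$. Your additional care with the hermitian condition~\eqref{eq:hermitian} via injectivity of $\mathcal{L}\mu$, and your explicit treatment of the ``in particular'' claim through Theorem~\ref{thm:characterization1}, only spell out details the paper leaves implicit.
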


\begin{proof} We follow the proof of the univariate version in \cite{berg1984harmonic}.
	If $\G$ is conditionally negative definite, then $\exp^\ast(-st\G)$ is positive definite for all $s, t >0$ due to Theorem \ref{thm:schoenberg}. Since conditionally negative definite matrix-valued functions are closed under integration, the function  
	$$\left(\mathcal{L}\mu(t\gamma_{ij}(\h))\right)_{i,j=1,\dots,m}= \left(\int_0^\infty \exp(-st\gamma_{ij}(\h)) d\mu(s)\right)_{i,j=1,\dots,m}$$ 
	is positive definite for all $t >0$. Suppose now that $\left(\mathcal{L}\mu(t\gamma_{ij})\right)_{i,j=1,\dots,m}$ is positive definite for all $t >0$. We then have 
	\begin{align*}
	\frac{\bb{1_m1_m^\top} -\left(\mathcal{L}\mu(t\gamma_{ij}(\h))\right)_{i,j=1,\dots,m}}t &=\left(\frac1t (1- \mathcal{L}\mu(t\gamma_{ij}(\h)))\right)_{i,j=1,\dots,m} \\ &= \left(\int_0^\infty \frac{1-\exp(-st\gamma_{ij}(\h))}t d\mu(s)\right)_{i,j=1,\dots,m}.
	\end{align*} Using the dominated convergence theorem due to $\left\lvert\frac{1-\exp(-st\gamma_{ij}(\h))}t\right\rvert \le s\gamma_{ij}(\h)$ for $s,t>0$, $i,j=1,\dots,m$, we get
	\begin{align*}
	\frac{\bb{1_m1_m^\top} -\left(\mathcal{L}\mu(t\gamma_{ij}(\h))\right)_{i,j=1,\dots,m}}t	&\overset{t \rightarrow 0}{\longrightarrow} \left( \gamma_{ij}(\h) \int_0^\infty s d\mu(s)   \right)_{i,j=1,\dots,m}.
	\end{align*} 
	Being a pointwise limit of conditionally negative definite functions, $\G$ itself is conditionally negative definite.
\end{proof}

\begin{corollary} \label{cor:schoenberglaplaceexponential} 
	A function $\G: \RR^d \rightarrow [0,\infty)^{m \times m}$ is a pseudo-variogram, if and only if \begin{equation} \label{eq:schoenberggamma} \left(\left(1 + t\gamma_{ij}(\h)\right)^{-\lambda}\right)_{i,j=1,\dots,m}, \quad \h \in \RR^d, \quad  \lambda >0, \end{equation} is a correlation function of an $m$-variate random field for all $t >0$.
\end{corollary}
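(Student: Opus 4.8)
The plan is to obtain this corollary as a direct specialization of Theorem~\ref{thm:schoenberglaplace}. Fixing $\lambda > 0$, I would look for a probability measure $\mu$ on $[0,\infty)$ whose Laplace transform is exactly $\mathcal{L}\mu(x) = (1+x)^{-\lambda}$. For such a $\mu$, the matrix $\left(\mathcal{L}\mu(t\gamma_{ij}(\h))\right)_{i,j=1,\dots,m}$ appearing in that theorem becomes precisely the matrix in \eqref{eq:schoenberggamma}, so the desired equivalence follows at once from the ``in particular'' part of Theorem~\ref{thm:schoenberglaplace}, using also that the codomain $[0,\infty)^{m\times m}$ required there matches the hypothesis here.

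The crucial step is to recognize $(1+x)^{-\lambda}$ as the Laplace transform of a Gamma distribution. Concretely, I would take $\mu$ to be the Gamma law with shape parameter $\lambda$ and unit rate, that is, the measure with Lebesgue density $s \mapsto \frac{1}{\Gamma(\lambda)} s^{\lambda-1} e^{-s}$ on $(0,\infty)$. A single substitution $u = s(1+x)$ then gives
$$\mathcal{L}\mu(x) = \frac{1}{\Gamma(\lambda)}\int_0^\infty s^{\lambda-1} e^{-s(1+x)}\, ds = (1+x)^{-\lambda}, \quad x \in [0,\infty),$$
as required. Before invoking Theorem~\ref{thm:schoenberglaplace}, I would verify its moment hypothesis $0 < \int_0^\infty s\, d\mu(s) < \infty$; but this integral is just the mean of the Gamma distribution, namely $\lambda$, which lies in $(0,\infty)$ for every $\lambda > 0$.

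I do not anticipate any genuine obstacle: the entire content lies in identifying the correct measure $\mu$, after which the statement is a one-line consequence of Theorem~\ref{thm:schoenberglaplace}. The only point meriting a little care is that the equivalence is asserted for each fixed $\lambda > 0$ separately, so I would keep $\lambda$ fixed throughout and simply note that the identical argument applies verbatim for every positive $\lambda$.
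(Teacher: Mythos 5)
Your proposal is correct and matches the paper's own proof exactly: the paper likewise proves this corollary by choosing $\mu(ds)=\frac{1}{\Gamma(\lambda)}e^{-s}s^{\lambda-1}\ensuremath{\mathbbm{1}}_{(0,\infty)}(s)\,ds$, the Gamma law with shape $\lambda$ and unit rate, in Theorem~\ref{thm:schoenberglaplace}. Your additional verifications (the Laplace transform computation and the finite positive mean $\lambda$) are details the paper leaves implicit, and they are accurate.
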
 
\begin{proof}
	Choose $\mu(ds)=\frac1{\Gamma(\lambda)}\exp(-s)s^{\lambda-1} \ensuremath{\mathbbm{1}}_{(0,\infty)}(s)   ds$ in Theorem \ref{thm:schoenberglaplace}. 
\end{proof}

There are further matrix-valued versions of univariate results. For instance, Bernstein functions also operate on matrix-valued conditionally negative definite functions \cite{berg1984harmonic}, and can thus be used to derive novel pseudo-variograms from known ones. 

\begin{proposition} \label{prop:bernsteinnd}
	Let $\G: \RR^d \rightarrow [0,\infty)^{m\times m}$ be conditionally negative definite. Let $g: [0, \infty) \rightarrow [0,\infty)$ denote the continuous extension of a Bernstein function. Then $\bb{g \circ \G}$ with $\left((\bb{g \circ \G})(\h)\right)_{ij}:= (g\circ \gamma_{ij})(\h)$, $i,j=1,\dots,m$, is conditionally negative definite. In particular, if $g(0)=0$ and $\G$ is a pseudo-variogram, then $\bb{g \circ \gamma}$ is again a pseudo-variogram.
\end{proposition}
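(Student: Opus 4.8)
The plan is to reduce the matrix-valued statement to the scalar Bernstein-function behaviour on conditionally negative definite functions, exploiting the Schoenberg-type characterization in Theorem \ref{thm:schoenberg} to pass between the conditionally negative definite and positive definite worlds. Recall that a Bernstein function $g$ admits the L\'evy--Khintchine representation $g(x) = a + bx + \int_0^\infty (1 - e^{-sx})\, d\nu(s)$ for $x \ge 0$, with $a, b \ge 0$ and $\nu$ a measure on $(0,\infty)$ satisfying $\int_0^\infty \min(1,s)\, d\nu(s) < \infty$. The key observation is that each constituent piece of this representation preserves conditional negative definiteness in the matrix-valued sense of Definition \ref{def:strongnd}, and that the cone of conditionally negative definite functions is closed under the relevant operations.

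First I would verify the building blocks. The constant term $a \bb{1_m 1_m^\top}$ is trivially conditionally negative definite, since for test vectors with $\bb{1_m^\top}\sum_k \bb{a_k} = 0$ the double sum $\sum_{i,j} \bb{a_i^\top}(a\bb{1_m1_m^\top})\bb{a_j} = a\left(\bb{1_m^\top}\sum_k \bb{a_k}\right)^2 = 0 \le 0$, and property \eqref{eq:hermitian} is immediate. The linear term $b\G$ is conditionally negative definite because $\G$ is and the cone is closed under non-negative scaling. For the integral term, the crucial step is to show that $\h \mapsto \bb{1_m 1_m^\top} - \exp^\ast(-s\G(\h))$, whose $(i,j)$ entry is $1 - e^{-s\gamma_{ij}(\h)}$, is conditionally negative definite for each fixed $s > 0$; this is exactly the computation already carried out in the second half of the proof of Theorem \ref{thm:schoenberg}, where $\left(\tfrac{1 - e^{-t\gamma_{ij}}}{t}\right)_{i,j}$ appears as a conditionally negative definite function whenever $\exp^\ast(-t\G)$ is positive definite. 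Since $\G$ is assumed conditionally negative definite, Theorem \ref{thm:schoenberg} guarantees $\exp^\ast(-s\G)$ is positive definite for every $s > 0$, so each $\bb{1_m1_m^\top} - \exp^\ast(-s\G)$ is conditionally negative definite.

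Having established that the integrand is conditionally negative definite for each $s$, I would integrate against $\nu$ and invoke the closure of the cone under integration (noted just after Definition \ref{def:strongnd}), giving that $\h \mapsto \int_0^\infty \left(1 - e^{-s\gamma_{ij}(\h)}\right) d\nu(s)$ is conditionally negative definite. Summing the three pieces then yields that $\bb{g \circ \G}$, whose $(i,j)$ entry equals $a + b\gamma_{ij}(\h) + \int_0^\infty (1 - e^{-s\gamma_{ij}(\h)}) d\nu(s) = g(\gamma_{ij}(\h))$, is conditionally negative definite. For the final assertion, if additionally $g(0) = 0$ and $\G$ is a pseudo-variogram, then $\gamma_{ii}(\0) = 0$ forces $(g\circ\gamma)_{ii}(\0) = g(0) = 0$, and Theorem \ref{thm:characterization1} promotes the conditional negative definiteness of $\bb{g\circ\G}$ together with the vanishing main diagonal at the origin to the existence of a Gaussian random field with pseudo-variogram $\bb{g\circ\G}$.

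The main obstacle I anticipate is integrability and well-definedness at the level of the L\'evy measure: one must check that the matrix-valued integral $\int_0^\infty (1 - e^{-s\G}) d\nu(s)$ converges entrywise and that the closure-under-integration argument applies despite $\nu$ being only $\sigma$-finite with the $\min(1,s)$-integrability condition rather than finite. Near $s = 0$ the bound $1 - e^{-s\gamma_{ij}(\h)} \le s\gamma_{ij}(\h)$ (valid since the entries are non-negative) controls the integrand by $s$, matching the $\int \min(1,s)\,d\nu < \infty$ tail, while for large $s$ the integrand is bounded by $1$; together these secure convergence. The only genuine subtlety is confirming that conditional negative definiteness, an inequality constraint closed under finite sums and monotone pointwise limits, survives integration against such a measure, which follows by approximating $\nu$ with finite truncations $\nu|_{[\epsilon, N]}$ and passing to the limit.
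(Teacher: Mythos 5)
Your proposal is correct and follows essentially the same route as the paper's proof: decompose $g$ via its L\'evy--Khintchine representation $g(x)=a+bx+\int_0^\infty(1-e^{-xt})\,d\nu(t)$, use Theorem \ref{thm:schoenberg} to see that each function $\bb{1_m1_m^\top}-\exp^\ast(-t\G)$ is conditionally negative definite, and conclude by closure of the cone under sums, non-negative scaling, and integration. The extra details you supply (the explicit check for the constant term, the $\min(1,t)$-integrability bound, and the reduction of the ``in particular'' claim to Theorem \ref{thm:characterization1}) are all consistent with, and merely elaborate on, the paper's argument.
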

\begin{proof}
	Since $g$ is the continuous extension of a Bernstein function, it has a representation $$g(x)=a+bx+\int_0^\infty (1-\exp(-x t))d\nu(t),$$ where $a,b \ge 0$ and $\nu$ is a measure on $[0,\infty)$ with $\int_0^\infty \min(1,t) \nu(dt) < \infty$ \cite{schilling2012}. Thus,
	\begin{align*}
	\left((\bb{g\circ\gamma})(\h)\right)_{ij} &= a+b\gamma_{ij}(\h)+\int_0^\infty (1-\exp(-t\gamma_{ij}(\h)))d\nu(t).
	\end{align*}
	Due to Theorem \ref{thm:schoenberg}, $\bb{g \circ \G}$ is a sum of conditionally negative definite matrix-valued functions and therefore conditionally negative definite itself. 
\end{proof}

\section{Multivariate versions of Gneiting's space-time model} \label{sec:gneitingmultivariat}

Schoenberg's univariate result is often an integral part of proving the validity of covariance models. Here, we use its matrix-valued counterparts derived in the previous section to naturally extend covariance models of Gneiting type to the multivariate case. 

Gneiting's original space-time model is a univariate covariance function on $\RR^d\times \RR$ defined via 
\begin{equation} \label{eq:gneitingoriginal}
G(\h,u)= \frac1{\psi(\lvert u\rvert^2)^{d/2}}\varphi\left(\frac{\lVert \h\rVert^2}{\psi(\lvert u\rvert^2)}\right),  \quad (\h,u) \in \RR^d \times \RR,
\end{equation} 
where $\psi: [0, \infty) \rightarrow [0,\infty)$ is the continuous extension of a Bernstein function, and $\varphi: [0,\infty) \rightarrow [0,\infty)$ is the continuous extension of a bounded completely monotone function \cite{gneiting2002}. For convenience, we simply speak of bounded completely monotone functions henceforth.
Model \eqref{eq:gneitingoriginal} is very popular in practice due to its versatility and capability to model space-time interactions, see \cite{porcu2021review} for a list of several applications. Its special structure has attracted and still attracts interest from a theoretical perspective as well, resulting in several extensions and refinements of the original model~\eqref{eq:gneitingoriginal}, see \cite{zastavnyi2011characterization,menegatto2020,porcu2020gneiting, porcu2016spatio}, 
for instance. Only recently, specific simulation methods for the so-called extended Gneiting class, a special case of \cite[Thereom 2.1]{zastavnyi2011characterization},
\begin{equation}\label{eq:gneitingextended}
G(\h,\uu)= \frac1{(1+\gamma(\uu))^{d/2}}\varphi\left(\frac{\lVert \h\rVert^2}{1+\gamma(\uu)}\right),  \quad (\h,\uu) \in \RR^d \times \RR^l,
\end{equation}
with $\gamma$ denoting a continuous variogram, have been proposed \cite{allard2020simulating}. One of these methods is based on an explicit construction of a random field, where the continuity assumption on $\gamma$ is not needed \cite{allard2020simulating}, and which can be directly transferred to the multivariate case via pseudo-variograms.

\begin{theorem} \label{thm:multivariateextendedgneiting}
	Let $R$ be a non-negative random variable with distribution $\mu$, $\bb{\Omega} \sim N(\0,\mathbf{1}_{d \times d})$ with $\mathbf{1}_{d \times d} \in \RR^{d \times d}$ denoting the identity matrix, $U \sim U(0,1)$, $\Phi\sim U(0,2\pi)$, and let $\bb{W}$ be a centred, $m$-variate Gaussian random field on $\RR^l$ with pseudo-variogram~$\G$, all independent. Then the $m$-variate random field $\Z$ on $\RR^d \times \RR^l$ defined via $$ Z_i(\x,\ttt)=\sqrt{-2\log(U)}\cos\left(\sqrt{2R}\langle\bb{\Omega},\x\rangle + \frac{\lVert \bb{\Omega} \rVert}{\sqrt{2}}W_i(\ttt)+\Phi \right), (\x,\ttt) \in \RR^d\times \RR^l, i =1,\dots,m,$$
	has the extended Gneiting-type covariance function 
	\begin{equation} \label{eq:multivariatextendedGneiting} G_{ij}(\h,\uu)= \frac1{(1+\gamma_{ij}(\uu))^{d/2}}\varphi\left(\frac{\lVert \h\rVert^2}{1+\gamma_{ij}(\uu)}\right), (\h,\uu) \in \RR^d\times \RR^l, \quad i,j=1,\dots,m,
	\end{equation}
	where $\varphi$ denotes a bounded completely monotone function. 
\end{theorem}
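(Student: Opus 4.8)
The plan is to compute the cross-covariance $\EE\!\left[Z_i(\x+\h,\ttt+\uu)Z_j(\x,\ttt)\right]$ directly, integrating out the independent randomising ingredients one at a time. First I would note that each $Z_i$ is centred: since $\Phi\sim U(0,2\pi)$ is independent of all other variables, $\EE_\Phi[\cos(c+\Phi)]=0$ for any $\Phi$-independent $c$, so $\EE[Z_i(\x,\ttt)]=0$ and the claimed $G_{ij}$ is genuinely a covariance rather than a mere second moment. Writing the product as $(-2\log U)\cos A\cos B$, where $A$ and $B$ are the two cosine arguments, I would apply $\cos A\cos B=\tfrac12\bigl(\cos(A-B)+\cos(A+B)\bigr)$. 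The argument $A+B$ contains the term $2\Phi$, so its contribution vanishes after integrating over $\Phi$, whereas $A-B$ is free of $\Phi$. Since $\EE[-2\log U]=2$, this prefactor and the $\tfrac12$ combine to $1$, leaving
\[
\EE\!\left[Z_i(\x+\h,\ttt+\uu)Z_j(\x,\ttt)\right]=\EE\bigl[\cos(A-B)\bigr],\qquad A-B=\sqrt{2R}\,\langle\bb{\Omega},\h\rangle+\tfrac{\lVert\bb{\Omega}\rVert}{\sqrt2}\bigl(W_i(\ttt+\uu)-W_j(\ttt)\bigr).
\]

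Next I would integrate out $\bb{W}$ conditionally on $(R,\bb{\Omega})$. The only $\bb{W}$-randomness enters through the increment $D:=W_i(\ttt+\uu)-W_j(\ttt)$, which is centred Gaussian with $\Var D=2\gamma_{ij}(\uu)$ by the very definition of the pseudo-variogram $\G$; this is the single point at which the hypothesis on $\bb{W}$ is used. Applying $\EE[\cos(c+X)]=\cos(c)\,e^{-\Var X/2}$ for a constant $c$ and centred Gaussian $X$, with $c=\sqrt{2R}\,\langle\bb{\Omega},\h\rangle$ and $X=\tfrac{\lVert\bb{\Omega}\rVert}{\sqrt2}D$ of variance $\lVert\bb{\Omega}\rVert^2\gamma_{ij}(\uu)$, gives
\[
\EE_{\bb{W}}\bigl[\cos(A-B)\bigr]=\cos\!\bigl(\sqrt{2R}\,\langle\bb{\Omega},\h\rangle\bigr)\exp\!\bigl(-\tfrac12\lVert\bb{\Omega}\rVert^2\gamma_{ij}(\uu)\bigr).
\]

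Then I would integrate out $\bb{\Omega}\sim N(\0,\mathbf{1}_{d\times d})$, and finally $R$. The factor $\exp(-\tfrac12\lVert\bb{\Omega}\rVert^2\gamma_{ij}(\uu))$ reweights the standard Gaussian density into an unnormalised $N(\0,(1+\gamma_{ij}(\uu))^{-1}\mathbf{1}_{d\times d})$ density; its total mass produces the prefactor $(1+\gamma_{ij}(\uu))^{-d/2}$, while evaluating $\EE[\cos(\langle\bb{\Omega},\sqrt{2R}\,\h\rangle)]$ against the reweighted measure (the real part of a Gaussian characteristic function) yields $\exp\!\bigl(-R\lVert\h\rVert^2/(1+\gamma_{ij}(\uu))\bigr)$. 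This leaves $\EE_R$ of a pure exponential, and invoking the Bernstein--Widder theorem—under which the bounded completely monotone $\varphi$ is the Laplace transform of the distribution $\mu$ of $R$, i.e.\ $\EE_R[e^{-Rs}]=\mathcal{L}\mu(s)=\varphi(s)$—identifies the result with $\varphi\bigl(\lVert\h\rVert^2/(1+\gamma_{ij}(\uu))\bigr)$, which is exactly \eqref{eq:multivariatextendedGneiting}.

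I expect the $\bb{\Omega}$-integral to be the only genuinely delicate step, as it is the crux of Gneiting's construction: a single Gaussian vector simultaneously supplies the spatial frequency through $\langle\bb{\Omega},\h\rangle$ and, through its squared norm, the scale-mixing that couples the spatial part with the pseudo-variogram-encoded temporal part. The care is to treat the two occurrences of $\bb{\Omega}$ jointly rather than separately, and to check that the calibration constants $\sqrt{2R}$ and $\lVert\bb{\Omega}\rVert/\sqrt2$ are exactly what is needed to produce the clean exponents. Non-negativity of $\gamma_{ij}$, guaranteed because $\G$ is a pseudo-variogram, is what makes the reweighted Gaussian legitimate and the denominator $1+\gamma_{ij}(\uu)$ positive.
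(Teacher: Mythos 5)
Your proof is correct and follows essentially the same route as the paper, which simply delegates the trigonometric and Gaussian integration steps to the proof of Theorem 3 in Allard et al.\ (2020) rather than writing them out. The one genuinely multivariate ingredient---that $W_i(\ttt+\uu)-W_j(\ttt)$ is centred Gaussian with variance $2\gamma_{ij}(\uu)$ by the definition of the pseudo-variogram---is exactly the point the paper's proof isolates, and you identify and use it in the same way.
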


\begin{proof}
	The proof follows the lines of the proof of Theorem 3 in \cite{allard2020simulating}. In the multivariate case, the cross-covariance function reads \begin{align*}
	\Cov(Z_i(\x,\ttt),Z_j(\y,\bb{s}))&= \EE \cos\left(\sqrt{2R}\langle\bb{\Omega},\x-\y\rangle + \frac{\lVert \bb{\Omega}\rVert}{\sqrt{2}}\left(W_i(\ttt)-W_j(\bb{s})\right)\right), 
	\end{align*} 
	$(\x,\ttt), (\y,\bb{s}) \in \RR^d \times \RR^l, i,j=1,\dots,m$.
	Due to the assumptions, $W_i(\ttt)-W_j(\bb{s})$ is a Gaussian random variable with mean zero and variance $2\gamma_{ij}(\ttt-\bb{s})$. Further proceeding as in \cite{allard2020simulating} gives the result.
\end{proof}

Theorem \ref{thm:multivariateextendedgneiting} provides a multivariate extension of the extended Gneiting class, and lays the foundations for a simulation algorithm for an approximately Gaussian random field with the respective cross-covariance function, cf.\,\cite{allard2020simulating}. The existence of a Gaussian random field with a preset pseudo-variogram $\G$ and the possibility to sample from it are ensured by Theorem \ref{thm:characterization1} and Lemma \ref{lem:pdndconstruction}, respectively.

Due to our results in previous sections, Theorem \ref{thm:multivariateextendedgneiting} can be easily generalized further, replacing $d/2$ in the denominator in Equation \eqref{eq:multivariatextendedGneiting} by a general parameter $r \ge d/2$. 

\begin{corollary} \label{cor:multivariateextendedGneiting}
	Let $\G: \RR^l \rightarrow \RR^{m \times m}$ be a pseudo-variogram. Then the function $\bb{G}: \RR^d \times \RR^l \rightarrow \RR^{m \times m}$ with 
	\begin{equation} \label{eq:multivariateextendedGneiting2}
	G_{ij}(\h,\uu)= \frac1{(1+\gamma_{ij}(\uu))^{r}}\varphi\left(\frac{\lVert  \h\rVert^2}{1+\gamma_{ij}(\uu)}\right), (\h,\uu) \in \RR^d\times \RR^l, \quad i,j=1,\dots,m,
	\end{equation}
	is a matrix-valued correlation function for $r \ge \frac d2$ and a bounded completely monotone function~$\varphi$.
\end{corollary}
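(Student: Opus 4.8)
The plan is to read the right-hand side of \eqref{eq:multivariateextendedGneiting2} as an entrywise (Hadamard) product of two matrix-valued correlation functions, each of which is already delivered by an earlier result, and then to invoke closure of positive definiteness under Hadamard products. Since a correlation function must satisfy $G_{ii}(\0,\0)=1$ while $G_{ii}(\0,\0)=\varphi(0)$, I would first normalise $\varphi$ so that $\varphi(0)=1$ (replace $\varphi$ by $\varphi/\varphi(0)$, which is again bounded completely monotone); this is harmless and fixes the diagonal-at-zero value. Writing $r=\frac d2+\rho$ with $\rho\ge 0$, I would then split
\[
G_{ij}(\h,\uu)=\underbrace{\bigl(1+\gamma_{ij}(\uu)\bigr)^{-\rho}}_{=:H_{ij}(\uu)}\cdot\underbrace{\frac1{(1+\gamma_{ij}(\uu))^{d/2}}\varphi\!\left(\frac{\lVert\h\rVert^2}{1+\gamma_{ij}(\uu)}\right)}_{=:G^{0}_{ij}(\h,\uu)},
\]
so that $\bb{G}=\bb{H}\circ\bb{G}^{0}$ entrywise, and it suffices to show that both factors are matrix-valued correlation functions.

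For the factor $\bb{G}^{0}$ I would argue as follows. Because $\G$ is a pseudo-variogram, Theorem~\ref{thm:characterization1} guarantees a centred $m$-variate Gaussian field $\bb{W}$ on $\RR^l$ with pseudo-variogram $\G$; feeding this $\bb{W}$ into Theorem~\ref{thm:multivariateextendedgneiting} shows that $\bb{G}^{0}$ is exactly the cross-covariance function \eqref{eq:multivariatextendedGneiting} produced there, hence positive definite, and with $\varphi(0)=1$ a correlation function. For the factor $\bb{H}$, note that pseudo-variograms have non-negative entries, so Corollary~\ref{cor:schoenberglaplaceexponential} applies with $t=1$ and $\lambda=\rho$ whenever $\rho>0$, giving that $\bb{H}(\uu)=\bigl((1+\gamma_{ij}(\uu))^{-\rho}\bigr)_{i,j}$ is an $m$-variate correlation function; when $\rho=0$ one simply has $\bb{H}=\bb{1_m1_m^\top}$, which is trivially a correlation function. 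Finally, since positive definite matrix-valued functions are closed under Hadamard products (the Schur product theorem, already invoked in the proof of Theorem~\ref{thm:schoenberg}), the entrywise product $\bb{G}=\bb{H}\circ\bb{G}^{0}$ is positive definite, and its diagonal at $(\h,\uu)=(\0,\0)$ equals $1\cdot 1=1$; therefore $\bb{G}$ is a matrix-valued correlation function.

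I expect no deep obstacle: the argument is an assembly of the characterizations of Section~\ref{sec:schoenberg} and the construction of Section~\ref{sec:gneitingmultivariat}. The one point that genuinely needs the hypothesis is the splitting, in that the leftover exponent $\rho=r-\frac d2$ must be non-negative for Corollary~\ref{cor:schoenberglaplaceexponential} to apply; this is precisely the restriction $r\ge\frac d2$, since the Schoenberg--Laplace machinery only produces \emph{negative} powers of $1+\gamma_{ij}$ and a factor $(1+\gamma_{ij})^{+\lvert\rho\rvert}$ would not be covered. If one preferred a self-contained proof not treating Theorem~\ref{thm:multivariateextendedgneiting} as a black box, the crux would shift to the Gaussian subordination underlying that theorem: representing $\varphi(x)=\int_0^\infty e^{-xs}\,d\nu(s)$ with $\nu$ a probability measure and using the Fourier identity $\frac1{a^{d/2}}e^{-s\lVert\h\rVert^2/a}=(4\pi s)^{-d/2}\int_{\RR^d}e^{-\frac{a}{4s}\lVert\bb{\omega}\rVert^2}\cos\langle\bb{\omega},\h\rangle\,d\bb{\omega}$ to \emph{linearize} the dependence on $\gamma_{ij}(\uu)$, so that $\bigl(e^{-\frac{\lVert\bb{\omega}\rVert^2}{4s}\gamma_{ij}(\uu)}\bigr)_{i,j}=\exp^\ast\!\bigl(-\tfrac{\lVert\bb{\omega}\rVert^2}{4s}\G(\uu)\bigr)$ is positive definite by Theorem~\ref{thm:schoenberg}; integrating against the non-negative weights in $\bb{\omega}$, in $s$ and against $\nu$, and multiplying by the positive definite $\cos\langle\bb{\omega},\h\rangle$, would then reassemble $\bb{G}$. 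That linearization is where the exponent $d/2$ originates, and it is the only place where real analytic work occurs.
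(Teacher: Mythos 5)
Your proposal is correct and follows essentially the same route as the paper: the paper's proof also writes $r=\lambda+\frac d2$ and factors \eqref{eq:multivariateextendedGneiting2} as the componentwise product of a function of the form \eqref{eq:schoenberggamma} (i.e.\ your $\bb{H}$, justified by Corollary \ref{cor:schoenberglaplaceexponential}) with the $r=\frac d2$ case \eqref{eq:multivariatextendedGneiting} from Theorem \ref{thm:multivariateextendedgneiting}, concluding by closure of positive definiteness under Hadamard products. Your normalization $\varphi(0)=1$ and the explicit treatment of the boundary case $\rho=0$ are minor housekeeping points that the paper leaves implicit.
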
 

\begin{proof}
	We already proved the assertion for $r=\frac d2$ in Theorem \ref{thm:multivariateextendedgneiting}. Now, let $\lambda > 0$ and $r = \lambda + \frac d2$. Then the matrix-valued function \eqref{eq:multivariateextendedGneiting2} is the componentwise product of positive definite functions of the form \eqref{eq:schoenberggamma} and \eqref{eq:multivariatextendedGneiting}, and consequently positive definite itself. 
\end{proof}

Even further refinements of Corollary \ref{cor:multivariateextendedGneiting} are possible. We can replace $\bb{1_m1_m^\top} + \G$ in \eqref{eq:multivariateextendedGneiting2} by general conditionally negative definite matrix-valued functions, but for a subclass of completely monotone functions, the so-called generalized Stieltjes functions of order~$\lambda$. This leads to a multivariate version of a result in \cite{menegatto2020}. A bounded generalized Stieltjes function  $S: (0,\infty) \rightarrow [0,\infty)$ of order $\lambda >0$ has a representation
$$S(x) = a + \int_{(0,\infty)} \frac1{(x + v)^\lambda}d\mu(v), \quad x>0,$$ where $a\ge  0$ and the so-called Stieltjes measure $\mu$ is a positive measure on $(0,\infty)$, such that $\int_{(0,\infty)}v^{-\lambda}d\mu(v) < \infty$ \cite{menegatto2020}. As for completely monotone functions, we do not distinguish between a generalized Stieltjes function and its continuous extension in the following. Several examples of generalized Stieltjes functions can be found in \cite{menegatto2020, berg2021}.

\begin{theorem} \label{thm:multivariateGneitingStieltjes}
	Let $S_{ij}, i,j=1,\dots,m$, be generalized Stieltjes functions of order $\lambda >0$. Let the associated Stieltjes measures have densities $\varphi_{ij}$ such that $\left(\varphi_{ij}(v)\right)_{i,j=1,\dots,m}$ is a positive semi-definite matrix for all $v > 0$. Let $\bb{g}: \RR^d \rightarrow [0,\infty)^{m \times m}$, $\bb{f}: \RR^l \rightarrow (0,\infty)^{m \times m}$ be conditionally negative definite functions. Then, the function $\bb{G}: \RR^d\times \RR^l \rightarrow \RR^{m \times m}$ with $$ G_{ij}(\h,\uu)= \frac1{f_{ij}(\uu)^r}S_{ij}\left(\frac{g_{ij}(\h)}{f_{ij}(\uu)}\right), \quad (\h,\uu) \in \RR^d\times\RR^l, \quad i,j=1,\dots,m,   $$ is an $m$-variate covariance function for $r \ge \lambda$.
\end{theorem}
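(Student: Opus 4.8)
The plan is to substitute the generalized Stieltjes representation $S_{ij}(x)=a_{ij}+\int_{(0,\infty)}(x+v)^{-\lambda}\varphi_{ij}(v)\,dv$ (so that $d\mu_{ij}(v)=\varphi_{ij}(v)\,dv$) into the definition of $G_{ij}$ and then reorganize the result into pieces that are manifestly positive definite. Putting $x=g_{ij}(\h)/f_{ij}(\uu)$ and clearing the inner denominator, a short computation gives
\begin{equation*}
G_{ij}(\h,\uu)=\frac{a_{ij}}{f_{ij}(\uu)^{r}}+\int_{(0,\infty)}\frac{\varphi_{ij}(v)}{f_{ij}(\uu)^{\,r-\lambda}\,\bigl(g_{ij}(\h)+v\,f_{ij}(\uu)\bigr)^{\lambda}}\,dv ,
\end{equation*}
where $f_{ij}(\uu)>0$ keeps all powers finite and $g_{ij}\ge 0$, $v>0$ ensure $g_{ij}(\h)+v f_{ij}(\uu)>0$. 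The aim is to show that, for $r\ge\lambda$, both summands are positive definite matrix-valued functions of $(\h,\uu)\in\RR^d\times\RR^l$, after which closure of positive definiteness under sums finishes the argument (positive definite matrix-valued functions being exactly multivariate covariance functions).

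I would treat the factors under the integral as Hadamard factors. For fixed $v>0$ the integrand is the componentwise product of $(\varphi_{ij}(v))_{i,j}$, positive semi-definite by hypothesis; of $(f_{ij}(\uu)^{-(r-\lambda)})_{i,j}$; and of $((g_{ij}(\h)+v f_{ij}(\uu))^{-\lambda})_{i,j}$. The crucial observation is that $(\h,\uu)\mapsto\bb{g}(\h)+v\bb{f}(\uu)$ is conditionally negative definite on $\RR^d\times\RR^l$: each of $\bb{g},\bb{f}$ is conditionally negative definite and stays so under trivial extension to the product space (the admissible test families and the condition $\bb{1_m^\top}\sum_k\bb{a_k}=0$ are unchanged), and conditional negative definiteness forms a convex cone (Section~\ref{sec:negativedefinite}), hence is preserved under the sum and the factor $v>0$. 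Applying the elementary identity $x^{-s}=\Gamma(s)^{-1}\int_0^\infty t^{s-1}\exp(-xt)\,dt$ componentwise, I would write
\begin{equation*}
\bigl((g_{ij}(\h)+v f_{ij}(\uu))^{-\lambda}\bigr)_{i,j}=\frac{1}{\Gamma(\lambda)}\int_0^\infty t^{\lambda-1}\,\exp^\ast\!\bigl(-t(\bb{g}(\h)+v\bb{f}(\uu))\bigr)\,dt ,
\end{equation*}
and invoke Theorem~\ref{thm:schoenberg} to see that each $\exp^\ast(-t(\bb{g}+v\bb{f}))$ is positive definite, so the integral is positive definite by closure under integration. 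The same argument applied to $\bb{f}$ alone shows $(f_{ij}(\uu)^{-(r-\lambda)})_{i,j}$ is positive definite when $r>\lambda$, while for $r=\lambda$ it degenerates to $\bb{1_m1_m^\top}$, which is positive semi-definite. Thus for each $v$ the integrand is a Hadamard product of positive definite matrix-valued functions, hence positive definite, and integrating over $v\in(0,\infty)$ preserves this.

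For the constant summand, $(a_{ij}f_{ij}(\uu)^{-r})_{i,j}$ is the Hadamard product of the constant matrix $(a_{ij})_{i,j}$ with the positive definite function $(f_{ij}(\uu)^{-r})_{i,j}$ (obtained exactly as above), so it is positive definite as soon as $(a_{ij})_{i,j}$ is positive semi-definite; this is the one spot where a hypothesis on the constants is genuinely needed, and it holds in particular when all $a_{ij}=0$. Summing the two positive definite contributions yields that $\bb{G}$ is positive definite, i.e.\ a covariance function. I expect the main obstacle to be the reduction step: massaging the doubly parametrized expression into the clean Hadamard-product form above and identifying $(g_{ij}+v f_{ij})^{-\lambda}$ as a negative fractional power of a conditionally negative definite function on the product space — once that is in place, the Schoenberg machinery of Theorem~\ref{thm:schoenberg} does the rest. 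The remaining care is routine: justifying the Fubini-type interchange and the finiteness of the $v$-integral, which follow from the integrability $\int_{(0,\infty)}v^{-\lambda}\varphi_{ij}(v)\,dv<\infty$ together with the strict positivity of $\bb{f}$.
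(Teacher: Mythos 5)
Your proposal is correct and follows essentially the same route as the paper's proof: the same decomposition via the Stieltjes representation, the same $\Gamma$-integral identity reducing $\left((g_{ij}(\h)+vf_{ij}(\uu))^{-\lambda}\right)_{i,j}$ and the negative powers of $f_{ij}$ to mixtures of $\exp^\ast$-terms handled by Theorem~\ref{thm:schoenberg}, and the same closure properties (Hadamard products, integration) to assemble the result. Your explicit remark that the constant summand requires $(a_{ij})_{i,j}$ to be positive semi-definite (trivially satisfied when all $a_{ij}=0$ or when the constants coincide) is a point the paper's proof passes over by writing a single constant $a$ for every entry, so your treatment is in fact slightly more careful there.
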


\begin{proof} We follow the proof in \cite{menegatto2020}. It holds that
	\begin{align*} \label{eq:StieltjesGneiting}
	G_{ij}(\h,\uu) 
	&= \frac a{f_{ij}(\uu)^r} + \frac 1{f_{ij}(\uu)^{r-\lambda}} \int_0^\infty \frac1{(g_{ij}(\h) + vf_{ij}(\uu))^\lambda}\varphi_{ij}(v) dv
	\end{align*}
	The function $ x \mapsto \frac1{x^\alpha}$ is completely monotone for $\alpha \ge 0$ and thus the Laplace transform of a measure on $[0,\infty)$ \cite[Theorem 1.4]{schilling2012}. Therefore, $(1/f_{ij}^r)_{i,j=1,\dots,m}$ and $(1/f_{ij}^{r-\lambda})_{i,j=1,\dots,m}$ are positive definite functions due to Theorem \ref{thm:schoenberg} as mixtures of positive definite functions. Furthermore, we have 
	$$ \frac1{(g_{ij}(\h) + vf_{ij}(\uu))^\lambda}= \frac1{\Gamma(\lambda)}\int_0^\infty e^{-sg_{ij}(\h)}e^{-svf_{ij}(\uu)}s^{\lambda -1} ds.$$  The functions $\left(e^{-sg_{ij}(\h)}\right)_{i,j=1,\dots,m}$ and $\left(e^{-svf_{ij}(\uu)}\right)_{i,j=1,\dots,m}$ are again positive definite due to Theorem~\ref{thm:schoenberg} for all $s,v >0$, and so is their componentwise product. Since positive definite functions are closed under integration, $\left(\frac1{(g_{ij}(\h) + vf_{ij}(\uu))^\lambda}\right)_{i,j=1,\dots,m}$ is positive definite for all $v > 0$. Therefore, the function $\left(\frac1{(g_{ij}(\h) + vf_{ij}(\uu))^\lambda}\varphi_{ij}(v)\right)_{i,j=1,\dots,m}$ is also positive definite for all $v > 0$.  Combining and applying the above arguments shows our claim.
\end{proof}

Theorem \ref{thm:multivariateGneitingStieltjes} provides a very flexible model. In a space-time framework, it allows for different covariance structures in both space and time, and it does not require assumptions like continuity of the conditionally negative definite functions involved, or isotropy, which distinguishes it from the multivariate Gneiting-type models presented in \cite{bourotte2016flexible} and \cite{guella2020gaussian}, respectively.

\section*{Acknowledgment}

The authors gratefully acknowledge support by the German Research Foundation (DFG) through the Research Training Group RTG 1953.


\begin{thebibliography}{10}
	\bibitem{allard2020simulating}
	{\sc Allard, D., Emery, X., Lacaux, C. and Lantu{\'e}joul, C.}
	(2020). Simulating space-time random fields with nonseparable Gneiting-type covariance functions. \emph{Stat. Comput.} {\bf 30,} 1479--1495.
	\bibitem{berg1984harmonic}
	{\sc Berg, C., Christensen, J. P. R. and Ressel, P.}
	(1984). \emph{Harmonic Analysis on Semigroups: Theory of Positive Definite and Related Functions},
	Springer, New York.
	\bibitem{berg2021}
	{\sc Berg, C., Koumandos, S. and Pedersen, H. L.}
	(2021). Nielsen's beta function and some infinitely divisible distributions. \emph{Math. Nachr.} {\bf 294,} 426--449.	
	
	\bibitem{bourotte2016flexible}
{\sc Bourotte, M., Allard, D. and Porcu, E.}
(2016). A flexible class of non-separable cross-covariance functions for multivariate space-time data. \emph{Spat. Stat.} {\bf 18,} 125--146.	
	
	

	
	
	\bibitem{chen2019parametric}
	{\sc Chen, W. and Genton, M. G.}
	(2019). Parametric variogram matrices incorporating both bounded and unbounded functions. \emph{Stoch. Environ. Res. Risk Assess.} {\bf 33,} 1669--1679.
	\bibitem{cressie1998variance}
	{\sc Cressie, N. and Wikle, C. K.}
	(1998). The variance-based cross-variogram: you can add apples and oranges. \emph{Math. Geol.} {\bf 30,} 789--799.
	
	
	\bibitem{du2012}
	{\sc Du, J. and Ma, C.}
	(2012). Variogram matrix functions for vector random fields with second-order increments. \emph{Math. Geosci.} {\bf 44,} 411--425.
	\bibitem{genton2015maxstable}
	{\sc Genton, M. G., Padoan, S. A. and Sang, H.}
	(2015). Multivariate max-stable spatial processes. \emph{Biometrika.} {\bf 102,} 215--230.
	\bibitem{gesztesy2016conditional}
	{\sc Gesztesy, F. and Pang, M.}
	(2016). On (conditional) positive semidefiniteness in a matrix-valued context. \emph{arXiv:1602.00384}.
	\bibitem{gneiting2002}
	{\sc Gneiting, T.}
	(2002). Nonseparable, Stationary Covariance Functions for Space-Time Data. \emph{J. Amer. Statist. Assoc.} {\bf 97,} 590--600.
	\bibitem{gneiting2001}
	{\sc Gneiting, T., Sasv{\'a}ri, Z. and Schlather, M.}
	(2001). Analogies and correspondences between variograms and covariance functions. \emph{Adv. Appl. Prob.} {\bf 33,} 617--630.
	
		\bibitem{guella2020gaussian}
	{\sc Guella, J. C.}
	(2020). On Gaussian kernels on Hilbert spaces and kernels on Hyperbolic spaces. \emph{arXiv:2007.14697}.
	
	
	
	
	\bibitem{ma2011class}
	{\sc Ma, C.}
	(2011). A class of variogram matrices for vector random fields in space and/or time. \emph{Math. Geosci.} {\bf 43,} 229--242.
	\bibitem{ma2011vector}
	{\sc Ma, C.}
	(2011). Vector random fields with second-order moments or second-order increments. \emph{Stoch. Anal. Appl.} {\bf 29,} 197--215.
	\bibitem{matheron1972leccon}
	{\sc Matheron, G.}
	(1972). \emph{Le{\c{c}}on sur les fonctions al{\'e}atoire d'ordre 2}, Technical Report C-53, MINES Paristech - Centre de G{\'e}osciences.
	\bibitem{matheron1973intrinsic}
	{\sc Matheron, G.}
	(1973). The intrinsic random functions and their applications. \emph{Adv. Appl. Prob.} {\bf 5,} 439--468.
	\bibitem{menegatto2020}
	{\sc Menegatto, V. A.}
	(2020). Positive definite functions on products of metric spaces via generalized Stieltjes functions. \emph{Proc. Amer. Math. Soc.} {\bf 148,} 4781--4795.
	\bibitem{porcu2020gneiting}
	{\sc Menegatto, V. A., Oliveira, C. P. and Porcu, E.}
	(2020). Gneiting class, semi-metric spaces and isometric embeddings. \emph{Constr. Math. Anal.} {\bf 3,} 85--95.
	\bibitem{myers1982matrix}
	{\sc Myers, D. E.}
	(1982). Matrix formulation of co-kriging. \emph{J. Int. Assoc. Math. Geol.} {\bf 14,} 249--257.
	\bibitem{myers1991pseudo}
	{\sc Myers, D. E.}
	(1991). Pseudo-cross variograms, positive-definiteness, and cokriging. \emph{Math. Geol.} {\bf 23,} 805--816.
	\bibitem{oesting2017}
	{\sc Oesting, M., Schlather, M. and Friederichs, P.}
	(2017). Statistical post-processing of forecasts for extremes using bivariate Brown-Resnick processes with an application to wind gusts. \emph{Extremes.} {\bf 20,} 309--332.
	\bibitem{papritz1993}
	{\sc Papritz, A., K{\"u}nsch, H. R. and Webster, R.}
	(1993). On the pseudo cross-variogram. \emph{Math. Geol.} {\bf 25,} 1015--1026.
	
		\bibitem{porcu2016spatio}
	{\sc Porcu, E., Bevilacqua, M. and Genton, M. G.}
	(2016). Spatio-temporal covariance and cross-covariance functions of the great circle distance on a sphere. \emph{J. Amer. Statist. Assoc.} {\bf 111,} 888--898.
	

	
	
	
	\bibitem{porcu2021review}
{\sc Porcu, E., Furrer, R. and Nychka, D.}
(2021). 30 Years of space--time covariance functions. \emph{Wiley Interdiscip. Rev. Comput. Stat.} {\bf 13,} e1512.
	\bibitem{schilling2012}
	{\sc Schilling, R. L., Song, R. and Vondracek, Z.}
	(2012). \emph{Bernstein functions}, 
	2nd rev. and ext. edn. De Gruyter, Berlin.
	\bibitem{schlather2010}
	{\sc Schlather, M.}
	(2010). Some covariance models based on normal scale mixtures. \emph{Bernoulli.} {\bf 16,} 780--797.
	\bibitem{schoenberg1938metric}
	{\sc Schoenberg, I. J.}
	(1938). Metric spaces and positive definite functions. \emph{Trans. Amer. Math. Soc.} {\bf 44,} 522--536.
	\bibitem{ver1993multivariable}
	{\sc Ver Hoef, J. and Cressie, N.}
	(1993). Multivariable Spatial Prediction. \emph{Math. Geol.} {\bf 25,} 219--240.
	
	\bibitem{wackernagel2003multivariate}
	{\sc Wackernagel, H.}
	(2003). \emph{Multivariate geostatistics: an introduction with applications}, 
	3rd edn. Springer, Berlin.
	
	\bibitem{xie1994positive}
	{\sc Xie, T.}
	(1994). \emph{Positive definite matrix-valued functions and matrix variogram modeling}, PhD thesis, The University of Arizona.
	\bibitem{zastavnyi2011characterization}
	{\sc Zastavnyi, V. P. and Porcu, E.}
	(2011). Characterization theorems for the Gneiting class of space-time covariances. \emph{Bernoulli.} {\bf 17,} 456--465.
	

\end{thebibliography}
\end{document}